\documentclass[12pt,a4paper]{amsart}

\usepackage{amsfonts,amscd}
\usepackage{amsmath,amssymb,bbm}
\usepackage[mathcal]{euscript}

\usepackage[all]{xy}

\newtheorem{theorem}{Theorem}[section]
\newtheorem{prop}[theorem]{Proposition}
\newtheorem{coro}[theorem]{Corollary}
\newtheorem{lemma}[theorem]{Lemma}
\newtheorem{theo}[theorem]{Theorem}
\newtheorem*{conj}{Conjecture}

\theoremstyle{definition}
\newtheorem{exam}[theorem]{Example}
\newtheorem{defin}[theorem]{Definition}
\newtheorem{remk}[theorem]{Remark}

\numberwithin{equation}{section}


\def \Sum{\mathop{\textstyle{\sum}}}

\def\mN{\mathbbm N}

\def\Id{\mathbbm I}
\def\F{\mathfrak f}
\def\f{\mathfrak f}

\def\La{\Lambda}    \def\la{\lambda}
\def\kC{\mathcal C}
\def\de{\delta}	\def\al{\alpha}
\def\ga{\gamma}

\def\kL{\mathfrak L}
\def\kR{\mathfrak R}
\def\kA{\mathfrak A}  
\def\kB{\mathfrak B}  
\def\wA{\widetilde{\kA}}
\def \bV{\overline{V}}
\def\kF{\mathcal F}

\def\rB{\mathrm B}	\def\rD{\mathrm D}

\def\Br{\mathop\mathsf{Br}\nolimits} 
 
\def\add{\mathop\mathsf{add}\nolimits}
\def\gl{\mathop\mathfrak{gl}\nolimits}
\def\GL{\mathop\mathsf{GL}\nolimits}
\def\El{\mathop\mathsf{El}\nolimits}
\def\BT{{\sf BT\hskip-1pt}} 

\def\bR{\mathbf R}

\def\emb{\hookrightarrow}
\def\dar{\hskip-2pt\xymatrix@C=1.0em{\ar@{.*{\dir2{>}}}[r]&}\hskip-2pt}
\def\xarr{\xrightarrow}

\def\set#1{\left\{\,#1\,\right\}}
\def\setsuch#1#2{\left\{\,#1\mid #2\,\right\}}
\def\mtr#1{\begin{pmatrix} #1 \end{pmatrix}}

\def\pp{\partial} \def\td {\tilde{\pp}}
\def\tb{\tilde{b}}
\def\tc{\tilde{c}} \def\x{\tilde{x}}
\def\hb{\hat{\mathfrak b}} \def\y{\tilde y}

\def \ci {~{\scriptstyle{\circ}}~}
\def\ob{\mathop\mathrm{Ob}\nolimits}

\def \aa {\mathfrak a}
\def\bb{\mathfrak b}
\def\bbb{\tilde{\mathfrak b}}

\def \ww {\mbox{\boldmath$\omega$}}
\def\*{\otimes} \def\+{\oplus}
\def\0{\emptyset}  \def\sm{\setminus}

\def \Q{\widetilde Q}

\def\ko{\mathbbm k}
\def\dd{\mathbbm d}

\def\e{\varepsilon} \def\om{\omega}

\def\kmod{\mbox{-}\mathsf{mod}}
\def\bmod{\mbox{-}\mathsf{Mod}}

\def\hom{\mathop\mathrm{Hom}\nolimits}

\def\iff{if and only if }
\def\sot{solid-triangular}

\usepackage{hyperref}

\begin{document}


\author{Lesya Bodnarchuk \and Yuriy Drozd}
\address{Institut des Hautes \'Etudes Scientifiques}
\email{lesya\_bod@ihes.fr}
\urladdr{http://www.mathematik.uni-kl.de/$\sim$bodnarchuk}

\address{Institute of Mathematics, National Academy of Sciences of Ukraine}
\email{drozd@imath.kiev.ua}
\urladdr{http://www.imath.kiev.ua/$\sim$drozd}

\thanks{This work was started during the stay of the first author at
  the Mathematische Forschungsinstitut Oberwolfach as a Leibniz fellow
  and accomplished when both authors were there under the 
 ``Research in  Pairs'' program. We kindly thank the
 Mathematische Forschungsinstitut Oberwolfach for hospitality.
The second author was also partially supported by INTAS
 Grant 06-1000017-9093.}
\subjclass[2000]{Primary 16G60, Secondary 15A21, 16G20}
 \keywords{representations of boxes, tame and wild, brick, small reduction}

\title[Brick-tame matrix problems]{One class of wild but brick-tame \\ matrix problems}

\dedicatory{To the memory of A.V.\,Roiter}

\begin{abstract}  
We present a class of wild matrix problems (representations of boxes),
 which are ``\emph{brick-tame},'' 
i.e. only have one-parameter families of \emph{bricks} (representations
with trivial endomorphism algebra). This class includes several boxes
that arise in study of simple vector bundles on degenerations
of elliptic curves, as well as those arising from the coadjoint action of
some linear groups.
\end{abstract}

\maketitle


\tableofcontents

\section{Introduction}
\label{introduction}
Tame--wild dichotomy theorem asserts that any finitely dimensional algebra or a
Roiter box is either tame or wild, i.e. either indecomposable representations of
 any fixed vector dimension form at most finitely many one-parameter families or their description
 contains that of representations of any finitely generated algebra
 \cite{dr79,CB88,dr01}. In the latter (``wild'') case there is no chance to get a
 more or less reasonable classification of \emph{all} representations. Nevertheless,
 there are some wild algebras and boxes, 
 where
 one can get a good description
 of the so called ``\emph{bricks}'', i.e. representations with only scalar endomorphisms.
 Such algebras and boxes appear, for instance, in the theory of unitary representations
 of Lie groups \cite{dr92,bdf} and 
 in the study of vector bundles on degenerations of elliptic curves
 \cite{bd03,bod}.

 In this paper we consider a rather wide class of boxes (called $\BT$-boxes),
 which, though being wild,
 behave well under the so called ``small reduction'' in the sense of \cite{dr92}.
 It implies that the set of bricks  
 of any fixed vector dimension is either empty or form one one-parameter family.
 This class of boxes includes, in particular, the boxes that have appeared in 
 our study of vector bundles on Kodaira fibers in \cite{bd03, bod},
 so that in these cases the bricks correspond
 to \emph{simple vector bundles}. 
 Thus $\BT$-boxes play the key role 
 in the classification of simple vector bundles 
 on Kodaira fibers in the same way as the ``\emph{bunches of chains}'' do 
 in the description of \emph{all}
 vector bundles on Kodaira cycles \cite{drgr,surv}, 
and this paper gives the representation-theoretic background
for such applications. 
 
The following conjecture (due to Claus Ringel) provides
another motivation for studying such sorts of boxes:
 
 \begin{conj}
 Let $\kA$ be a finite dimensional algebra or  a Roiter box. 
 Then either the bricks over $\kA$
  form at most one-parameter families 
  in every fixed vector dimension, 
  or there is a fully faithful exact
 functor $\La\kmod\to\kA\kmod$ for every finitely generated $\ko$-algebra $\La$.
\emph{(In this case they say that $\kA$ is \emph{fully wild}.)}
 \end{conj}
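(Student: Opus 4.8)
The plan is to treat this as a \emph{brick-analogue} of the Drozd tame--wild dichotomy, replacing ``all representations'' by ``bricks'' throughout the reduction machinery. First I would reduce the algebra case to the box case: a finite dimensional algebra $\kA$ carries an associated Roiter box whose representations, and crucially whose bricks, encode those of $\kA\kmod$, so it suffices to prove the statement for Roiter boxes. The central tool is the \emph{small reduction} of \cite{dr92}, which the body of this paper shows behaves well on $\BT$-boxes; the strategy is to promote this good behaviour into a genuine dichotomy engine that tracks only the subcategory of bricks rather than the whole representation category.

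The second step is to set up a brick-preserving reduction algorithm. At each stage one performs a small reduction and verifies that it induces a bijection, compatible with vector dimensions, between the bricks of the original box and the bricks of the reduced box, up to the controlled one-parameter families that small reduction may introduce. Iterating, one produces a descending chain of boxes; by a finiteness argument on the size of the defining data the chain either stabilises at a \emph{minimal} box whose bricks are manifestly parametrised by at most one parameter in each fixed dimension --- the brick-tame alternative --- or it exposes, at some finite stage, a distinguished wild configuration in the reduced box.

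The decisive third step, and the genuine obstacle, is to show that such a configuration forces \emph{full} wildness \emph{realised by bricks}: for every finitely generated $\ko$-algebra $\La$ one must construct a fully faithful exact functor $\La\kmod\to\kA\kmod$, and moreover arrange that it carries bricks to bricks, so that the failure of the first alternative is exactly what produces the second. Here lies the gap that keeps the statement a conjecture. Drozd's dichotomy yields only a representation embedding in the wild case; it gives no control over endomorphism algebras, and indeed a box may be wild while remaining brick-tame, bricks being far sparser than arbitrary modules. So the real content is to characterise precisely which reduced boxes are brick-wild, and to prove that a box failing brick-tameness necessarily contains a minimal brick-wild subconfiguration --- one of a short, explicit list of ``critical'' boxes --- from which a fully faithful functor landing in bricks can be built.

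Finally I would close the argument by transferring the functor constructed at the critical stage back up the chain: each small reduction comes with an exact functor in the opposite direction that preserves full faithfulness and sends bricks to bricks, so composing these recovers the required $\La\kmod\to\kA\kmod$. The main difficulty, to reiterate, is isolating the finite list of critical brick-wild boxes and proving its completeness, i.e. that avoiding all of them is equivalent to brick-tameness. This is precisely the point at which the present results serve as evidence rather than proof: they establish the brick-tame alternative for one large, explicitly reducible class, the $\BT$-boxes, but the general classification of critical brick-wild boxes remains open, which is why the statement stands as a conjecture.
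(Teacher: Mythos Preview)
The statement you were asked to prove is explicitly labeled a \emph{Conjecture} in the paper (attributed to Ringel) and the paper provides no proof of it; it is presented purely as motivation for studying brick-tame boxes. There is therefore no ``paper's own proof'' against which to compare your attempt.

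Your proposal is not a proof but a research outline, and you are candid about this: you correctly identify the decisive gap, namely that Drozd's reduction algorithm controls indecomposables but not endomorphism rings, so the appearance of a wild configuration does not automatically yield a \emph{fully faithful} embedding, let alone one that respects bricks. This is exactly the obstruction, and indeed the paper's main theorem (Theorem~\ref{theo_main}) exhibits an entire class of boxes that are wild yet brick-tame, demonstrating concretely that ordinary wildness is strictly weaker than the failure of brick-tameness. Your suggestion that one would need a finite list of ``critical brick-wild'' configurations, analogous to the role of the free algebra $\ko\langle x,y\rangle$ in ordinary wildness, is a plausible shape for an eventual argument, but no such list is known and the paper does not claim one. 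In short: your assessment that the statement remains open is correct, and your outline is a reasonable heuristic for how a proof might eventually be organised, but it is not a proof and the paper does not supply one either.
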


\medskip
 Recall the general method to study representations of boxes,
especially effective for tame ones. The idea can be explained as follows.
For a given class of representations $\kC$ one 
constructs a reduction morphism $\F:\kA\to\wA$ replacing the box $\kA$
by a new one $\wA$ such that the induced functor
$\F^*:\wA\kmod\to \kA\kmod$ is fully faithful and its image contains
all representations from $\kC$.
Moreover, for representations $M\in\kC$ and $\widetilde{M}\in \wA\kmod$
such that $M= \F^*(\widetilde{M})$ one has
$\|\widetilde M\| < \| M\|$, where $\|M\|$ is the \emph{norm} of $M$
 defined in Subsection \ref{subsec 2.1}. Proceeding this way, we construct a morphism of boxes
$
\F=\f_m \f_{m-1}\dots   \f_1: 
\kA=\kA_1\to \kA_2\to\dots\to\kA_m,
$
such that $\kC$ is contained in the image of $\f^*$ and
$\kA_m=(A,V)$ is a \emph{minimal box}, i.e. such that the category $A$
is a direct product of several copies of the field $\ko$ and \emph{rational algebras} $\bR_i$,
i.e. localizations $\bR_i=\ko[t,f_i^{-1}]$ of the polynomial algebra by 
nonzero polynomials $f_i$.
Indecomposable $\bR_i$-modules are \emph{Jordan cells}
$J_r(\lambda)=\bR_i/(t-\lambda)^r$, where $\lambda\in \ko\sm \{\hbox{roots of } f_i\}.$
Thus, all indecomposable modules $M\in\kC$ are of the form $M = \F^*(J_k(\lambda)).$
For instance, the proof of the tame--wild dichotomy is just constructing, for
 any non-wild box $\kA$ and any vector dimension 
$\dd$, a reduction $\F$ like above, where $\kC$ is the set of all
representations of vector dimensions $\dd'\le\dd$.

The following example shows how this method can be applied for bricks.
 \begin{exam}
 \label{exam1}
  Consider the box $\kA$ given, as it is explained below, by the differential biquiver
 \begin{equation}\label{figsmall}
 \xymatrix @ -1pc
    {
1\ar@(ul,dl)_{a_{1}}\ar@{..>}@/_/[rrr]_{v}
&&& 2\ar@(ru,rd)^{a_2}\ar@/_/[lll]_{b}
} \qquad
  \pp(a_1)= bv,\ \pp(a_2)= -vb.
 \end{equation}
 (We do not write the zero differentials.)
 It is known to be wild (see the proof of Theorem~\ref{theo_main}). 
 One of the standard reduction steps, the reduction of the minimal
 edge $b$ (see Subsection~\ref{subsec 2.2} or \cite{dr01}) induces 
 a morphism of boxes $\f:\kA\to\wA$, where the 
 box $\wA$ is given by the differential biquiver 
\begin{equation}
 \label{figbig} \begin{split}
& \xymatrix @ -1pc
    {
1\ar@(ul,dl)_{a_{1}}
\ar@{..>}@/_/[rrr]_{\eta}
\ar@{..>}@/^+25pt/[rrrrrr]_{v}
&&& 
0\ar@(dr,dl)^{a_0}
\ar@{..>}@/_/[rrr]_{\xi}
\ar@/_/[lll]_{b_1}
&&&
2 \ar@(ru,rd)^{a_{2}}
\ar@/_/[lll]_{b_2}
\\
&&& &&&
} \\\
& \pp(a_1) = b_1\eta,\ \pp(a_2) = -\xi b_2,\
\pp(a_0) =- \eta b_1 +b_2\xi. 
 \end{split}
 \end{equation}   
 In this case $\f^*$ is an equivalence, hence, $\wA$ is also wild.
 Nevertheless, since $v$ does not occur in any differential, if a representation
 $M$ of $\wA$ is a brick, either $M_1=0$ or $M_2=0$. But if we cut from $\wA$
 one of the vertices $1$ or $2$, we obtain the same box $\kA$. 
 Thus, if we only consider bricks, the box $\kA$ is ``\emph{self-reproducing}.''
 It easily implies a description of bricks \cite{bd03}. In particular, 
 if there are bricks of some
 vector dimension $\dd$, they form one family parameterized by the elements $\lambda\in\ko$.

 Actually, this procedure is a partial case of ``small reduction''.
 \end{exam}
 
 The paper is organized as follows.
 In Section~\ref{sect2} we fix notations and recall some results 
 concerning representations of boxes and reduction algorithm.
 In Section~\ref{brick} we consider bricks and define \emph{brick-tame} boxes. 
 In Section~\ref{sec Type} we generalize Example~\ref{exam1} introducing \BT-boxes and 
 prove their main property, Theorem~\ref{theo_main}, which claims
 that the boxes of this class are always brick-tame
 despite being wild (except some trivial cases). 
 Section~\ref{coadj} is devoted to a special class of \BT-boxes arising from the adjoint
 action of linear groups over finite dimensional algebras on the dual
 spaces of their Lie algebras.
 

\section{Preliminaries}
\label{sect2}

\subsection{Boxes}
Let $\ko$ be an algebraically closed field. Recall that a $\ko$\emph{-category} is a category
 $A$ such that all morphism sets $A(X,Y)$ are vector spaces over $\ko$, while the
 multiplication of morphisms is $\ko$-bilinear. In what follows we
 only consider $\ko$-categories and identify $\ko$-algebras with
 $\ko$-categories with a unique object.
A tuple $\kA=(A,V,\e, \mu)$ is called a \emph{box} if
$A$ is a $\ko$-category and 
$V$ is a \emph{coalgebra} over $A,$ 
that is an $A$-$A$-bimodule with $A$-homomorphisms
$\e:V\to A$ (\emph{counit}) and $\mu:V\to V\otimes_A V$ 
(\emph{comultiplication}) such that 
$$        (id_V\otimes \mu)\ci \mu =  (\mu\otimes id_V)\ci \mu\ 
        \hbox{ and }\
  (id_V\otimes \e)\ci \mu =  (\e\otimes id_V)\ci \mu=id_V $$
 (under the natural identification of $A\otimes_AV$ and $V\otimes_AA$
 with $V$). Note that any $\ko$-category $A$ can be considered as a box
 if we set $V=A$ as a bimodule over itself, $\e=id_A$ and $\mu$ being
 the identification $A\otimes_AA=A$. This box is called \emph{principal}
 over the category $A$ and is usually identified with this category. 

\medskip
A box $\kA=(A,V)$ is called \emph{free} if $A$ is the path category of a quiver
 (oriented graph) and the \emph{kernel} of the box $\bV=\ker(\e)$ is a \emph{free
$A$-bimodule}, i.e. a direct sum of bimodules of the type $A_{ij}=A1_i\otimes 1_jA$,
 where $1_i$ denotes the empty path at the vertex $i$ (it is a primitive idempotent of $A$).
 We always suppose that the set of vertices of the quiver is $I=\{1,2,\dots,n\}$ and
 denote by $Q_0$ its set of arrows, which we call the \emph{solid arrows} of the box
 $\kA$. Moreover, we also consider the set of \emph{dotted arrows} $Q_1$, 
 where the number of dotted arrows from $j$ to $i$ (denoted as $v:
 j\dar i$) equals the number of summands 
 isomorphic to $A_{ij}$ in the kernel $\bV.$ In other words,
 the arrows of $Q_1$ are in one-to-one
 correspondence with the \emph{free generators} of the kernel $\bV$,
 i.e. those coming from the natural generators $1_i\otimes 1_j$
  of $A_{ij}$, and we usually identify them.
 Thus we obtain a \emph{biquiver}
 $Q=Q_\kA=(I,Q_0,Q_1)$ of the box $\kA$. 
 If $p$ is a path in the biquiver $Q$, its \emph{degree} $|\,p\,|$ is defined as
 the number of dotted arrows occurring in $p$. Thus the path category $\ko Q$
 becomes a graded category. We call a free normal box
 \emph{solid-connected} if the solid part $(I,Q_0)$ of its biquiver is
 connected (as a graph).

 The box $\kA=(A,V)$ is called \emph{normal} (or \emph{group-like}) if there are elements
 $\om_i\in V(i,i)$ such that $\e(\om_i)=1_i$ and $\mu(\om_i)=\om_i\otimes \om_i$
 for every $i\in\ob A$.
 The set $\ww=\setsuch{\om_i}{i\in \ob A}$ is called a \emph{normal section} of the box $\kA$.
 Given a normal section, the \emph{differential} $\partial$ of the box $\kA$
 is defined for a solid arrow $a:j\to i$ as $\om_ia-a\om_j$ (it belongs to $\bV$)
 and for a dotted arrow $v:j\dar i$ as $\mu(v)-v\otimes\om_j-\om_i\otimes v$
 (it belongs to $\bV\otimes_A\bV$). This differential extends to a derivation
 of the graded category $\ko Q$, i.e. to a linear map $\partial:\ko Q\to\ko Q$
 of degree $1$ such that $\partial^2=0$ and the \emph{Leibniz rule} holds:
 \[
  \partial(xy)=\partial(x)y+(-1)^{|x|}x\partial(y) .
 \]
 The pair $(Q,\partial)$ is called the \emph{differential biquiver} of the
 box $\kA$. It completely determines the free normal box $\kA$.
 
 A differential biquiver $(Q,\partial)$ is called
 \emph{\sot} (respectively, triangular) if there is a map $h:Q_0\to\mN$  
 (respectively $h:Q_0\cup Q_1\to\mN$) such that, for every arrow $a\in Q_0$, 
 (respectively, $a\in Q_0\cup Q_1$) its differential $\partial(a)$ only 
 contains solid arrows (respectively, arrows) $b$ with $h(b)<h(a)$
 (for instance, it never contains $a$ itself).  We call the function $h$ the 
\emph{triangulation for the differential biquiver $(Q,\pp)$}.  Certainly,
 the existence of triangulation can depend on the choice of free generators.
 A free normal box $\kA=(A,V)$ is called \emph{\sot} (respectively, \emph{triangular},
 or a \emph{Roiter box}) if there is a set of free generators for $A$ and $V$
 such that the resulting differential biquiver is \sot\ (respectively, triangular).

\subsection{Representations of boxes}
Let $\kA=(A,V)$ be a box.
The category $\kA\bmod$  
of \emph{$\kA$-modules}, or \emph{representations of $\kA$}, is
defined as follows.
\begin{itemize}
  \item  Its \emph{objects} are just $A$-modules.
  \item  A \emph{morphism} $S:M\to N$ between 
    two representations $M$ and $N$ 
    is a homomorphism of $A$-modules $V\*_AM\to N$.
  \item  The product $S'\ci S$ of two morphisms 
$S:M\to N$ and $S': N\to L$ is defined as the composition
 \[  \qquad
  S'(1\*S)(\mu\*1): V\*_AM\to V\*_AV\*_AM\to V\*_AN  \to L.
 \]
\end{itemize}
 One easily sees that if $\kA$ is the principal box over an algebra
$A$, the category of $\kA$-modules can be identified with that of
 $A$-modules, and we always do so.

If $\kA$ is a normal free box, the category of $\kA$-modules can be
 described in terms of its differential biquiver $(Q,\pp)$. Namely:
\begin{itemize}
   \item A representation $M$ of $\kA$ is given by two sets:
    \[
      \qquad\setsuch{M_i}{i\in I}\
      \text{ and }\
     \setsuch{M(a):M_i \to M_j}{a\in Q_0,\ a:i\to j},
    \]
    where $M_i$ are vector spaces and $M(a)$ are linear maps.
   \item  
    A morphism $M\to N$ is given by the set of linear maps 
       \[  \qquad
\setsuch{S_i:M_i\to N_i}{i\in I}\cup\setsuch{ S(v):M_i\to N_j}{v\in Q_1,\ v: i\dar j},
 \]
 where $S_i(x)=S(\om_i\*x)$ and $S_v(x)=S(v\*x)$ for $x\in M_i$,
 such that for any solid arrow $a:i\to j$ the following relation holds: 
      \[
                S_j M(a) - N(a)S_i =S(\pp(a))=\Sum \la N(p')S(u)M(p),
        \]
 if $\pp(a)=\sum \la p'up$, where $\la\in\ko$, $u\in Q_1$ and $p,p'$ are some 
 solid paths in $Q$.
\item 
 The components of the product $T=S'\ci S$ are defined as follows:
 \begin{align*} 
  T_i&=S'_i S_i,\\  \qquad
  T(v)&=S'_j S(v)+S'(v) S_j 
  + \Sum \la L(p_1) S'(u') N(p_2) S(u) M(p_3),
 \end{align*}
 if $v:i\dar j,\ \pp(v)=\sum \la p_1u'p_2up_3$, where $\la\in\ko$,
 $u,u'\in Q_1$ and $p_1,p_2,p_3$ are some solid paths.
\end{itemize}

The following lemma expresses the main properties of Roiter boxes.

\begin{lemma}\label{roi}\cite{kr,Ro2}
Let $\kA$ be a Roiter box and $M,N\in\kA\bmod$.
 \begin{enumerate}
   \item  A morphism $S:M\to N$ is an isomorphism \iff so are all maps $S_i$.
  \item  If $S:M\to M$ is an idempotent, there is a representation $N$ such that $S$
 factors as $S=S_1S_2$, where $S_1:N\to M,\ S_2:M\to N$ and $S_2S_1=\Id_N$
 (the identity map of $N$).
 \end{enumerate}
\end{lemma}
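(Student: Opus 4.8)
Both parts rest on the triangularity of a Roiter box: having fixed free generators, the differential biquiver $(Q,\pp)$ carries a triangulation $h\colon Q_0\cup Q_1\to\mN$, and all the (a priori non-linear) identities produced by the composition formula for $\kA$-modules can be solved recursively in increasing order of $h$, since every dotted arrow occurring in $\pp(v)$ has $h$-value strictly below $h(v)$. For \emph{(1)}, note first that $\Id_M$ has components $(\Id_M)_i=\mathrm{id}_{M_i}$ and $(\Id_M)(v)=0$; so if $S$ is an isomorphism with inverse $T$, reading vertex components in $T\ci S=\Id_M$ and $S\ci T=\Id_N$ gives $T_iS_i=\mathrm{id}$ and $S_iT_i=\mathrm{id}$, whence every $S_i$ is invertible. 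Conversely, assume all $S_i$ invertible. The plan is to build $T\colon N\to M$ with $T_i:=S_i^{-1}$ and with the $T(v)$ defined by recursion on $h(v)$: by the composition formula the $v$-component of $T\ci S$ equals $T(v)S_i$ plus a sum of terms, none involving $T(v)$ itself and each of which, apart from the already-fixed vertex components, only involves $T(u)$ with $h(u)<h(v)$; equating it to $0$ and using invertibility of $S_i$ determines $T(v)$. This yields $T$ with $T\ci S=\Id_M$; since $T$ again has invertible vertex components, the same construction gives $U\colon M\to N$ with $U\ci T=\Id_N$, and then $U=U\ci(T\ci S)=(U\ci T)\ci S=S$, so also $S\ci T=\Id_N$ and $S$ is an isomorphism.

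For \emph{(2)}, applying the composition formula to $S\ci S=S$ gives $S_i^2=S_i$ on each space $M_i$; fix the splitting $M_i=\mathrm{im}\,S_i\oplus\ker S_i$, so that $S_i$ becomes the block projection $E_i^0=\mtr{1&0\\0&0}$. Call an idempotent $E$ of $M$ \emph{strict} if in addition $E(v)=0$ for all dotted arrows $v$. For a strict $E$ the morphism relation at a solid arrow $a\colon i\to j$ reads $E_jM(a)-M(a)E_i=E(\pp a)=0$ (each summand of $E(\pp a)$ carries a factor $E(u)=0$), so $M(a)$ is block-diagonal; hence the $N$ with $N_i=\mathrm{im}\,S_i$ and the restricted maps is a sub-representation, and the inclusion $\iota\colon N\to M$ and projection $\pi\colon M\to N$ — both with vanishing dotted components — are morphisms with $\iota\pi=E$ and $\pi\iota=\Id_N$, which is exactly the required factorization. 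It therefore suffices to show that \emph{every idempotent $S$ of $M$ is conjugate, by an automorphism of $M$, to a strict one}: once $\Phi^{-1}S\Phi=E=\iota\pi$ we get $S=(\Phi\iota)(\pi\Phi^{-1})$ with $(\pi\Phi^{-1})(\Phi\iota)=\pi\iota=\Id_N$, and $\Phi^{-1}$ is again a morphism by part (1).

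This conjugacy I would establish by induction on the triangulation, killing the components $S(v)$ in increasing order of $h(v)$ by conjugating with unipotent automorphisms (those with $\Phi_i=\mathrm{id}_{M_i}$). The decisive computation: assuming $S(w)=0$ for all $w$ with $h(w)<d$ and taking $v\colon i\dar j$ with $h(v)=d$, comparing the $v$-components of $S\ci S$ and of $S$ — all quadratic correction terms involve dotted arrows of level $<d$ and hence vanish — yields $S(v)=E_j^0S(v)+S(v)E_i^0$, i.e. $S(v)$ is off-diagonal in block form, $S(v)=\mtr{0&b\\c&0}$; and conjugating by a $\Phi$ whose nonzero dotted components sit only at level $d$ replaces this $S(v)$ by $\mtr{0&b+q\\c-r&0}$, where $\Phi(v)=\mtr{p&q\\r&s}$, while leaving levels below $d$ unchanged (again the remaining terms of the composition formula involve lower-level dotted arrows only). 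Choosing $q=-b$ and $r=c$ kills $S(v)$ at all of level $d$ at once; iterating over the finitely many levels of $h$ produces the strict form, and pulling $\iota,\pi$ back through the conjugating automorphism and through the isomorphism of part (1) gives $S_1,S_2$.

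The point I expect to be the main obstacle is exactly this last step: checking rigorously that every quadratic term produced by the composition formula, both in $S\ci S$ and in the conjugate $\Phi^{-1}S\Phi$, is confined to levels of the triangulation strictly below the one currently being treated, so that the recursion closes, and that idempotency indeed pins $S(v)$ to off-diagonal form at the moment it is reached. The remainder is routine manipulation of the component formulas recalled above.
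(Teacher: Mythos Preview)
The paper does not supply its own proof of this lemma; it is stated with references to \cite{kr,Ro2} and left at that. So there is no in-paper argument to compare against, and your outline is essentially a reconstruction of the classical approach via recursion on the triangulation. The strategy is the right one, but there is a recurring gap you should close explicitly.

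In both parts you manipulate \emph{collections of maps} $(T_i,T(v))$ or $(\Phi_i,\Phi(v))$ and compose them by the formula from the paper, but you never verify that the collections you build are actually \emph{morphisms}, i.e.\ that they satisfy $T_jN(a)-M(a)T_i=T(\pp a)$ for every solid arrow $a$. In part~(1): having solved $T\ci S=\Id_M$ recursively, you still owe the check that $T$ is a morphism $N\to M$. The associativity step $U=U\ci(T\ci S)=(U\ci T)\ci S=S$ is formal and does hold for arbitrary collections (it comes from coassociativity of $\mu$), but it only shows $S$ has a two-sided inverse in that larger monoid, not in $\kA\bmod$. The claim \emph{is} true and can be proved by a separate induction on $h$ over the solid arrows, using the already-established morphism identities at lower levels together with $(T\ci S)(u)=0$; but you should say so.

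In part~(2) the same issue is sharper. Your unipotent $\Phi$ with $\Phi_i=\mathrm{id}$ and $\Phi(v)$ supported only at level~$d$ is in general \emph{not} an endomorphism of $M$: the morphism relation forces $0=\Phi(\pp a)$, and for a solid $a$ with $h(a)>d$ whose differential contains a dotted $u$ with $h(u)=d$, the term $M(p')\Phi(u)M(p)$ need not vanish. So you cannot conjugate inside $\mathrm{End}_\kA(M)$ as written. Two standard repairs: either let $\Phi$ be an isomorphism $M\to M'$ to a \emph{different} representation, setting $M'(a):=M(a)-\Phi(\pp a)$ (then $\Phi$ is a morphism by construction and an isomorphism by~(1)), and transport $S$ along it; or bypass conjugation entirely and build $N$, $\iota$, $\pi$ directly by simultaneous recursion on $h$, defining $N(a)$ and $\iota(v),\pi(v)$ level by level and using $S^2=S$ to see that the required images land where they must. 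The obstacle you flagged (quadratic correction terms confined to lower levels) is genuinely needed, but it is not the only one.
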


 In other words, all idempotents in the category $\kA\bmod$ split, i.e. it is
 \emph{fully additive} (or \emph{Karoubian}).
   Note that this lemma does not hold for arbitrary \sot{} boxes. We call a
 free normal \sot{} box $\kA$ \emph{layered} (by Crawley-Boevey \cite{CB88})
 if the statement (1) of Lemma~\ref{roi} holds for its representations.
 (In fact, it is a specification of \cite[Definition~3.6]{CB88} for
 the case of free boxes.)

 From now on we only consider normal free
 boxes  $\kA=(A,V)$ such that in the corresponding biquiver $Q=(I,Q_0,Q_1)$ all sets
 $I,Q_0,Q_1$ are \emph{finite}. We call a module $M\in\kA\bmod$ \emph{finite
 dimensional} if all spaces $M(i)\ (i\in I)$ are finite dimensional, and denote by
 $\kA\kmod$ the full subcategory of $\kA\bmod$ consisting of finite dimensional
 modules. Then all spaces $\hom_\kA(M,N)$ are also finite dimensional,
 therefore, if $\kA$ is a Roiter box, 
 $\kA\kmod$ is a \emph{Krull--Schmidt} category, i.e. a fully additive category with
 unique decomposition of objects into direct sums of indecomposable ones. 

\subsection{Base change Lemma.}
\label{subsec 2.1}
Recall that the \emph{vector dimension} of a representation $M\in\kA\kmod$
is a tuple $\dd(M)=(d_1,\dots, d_n)\in \mN^n,$
where $d_i= \dim_{\ko}(M_i).$ The \emph{norm} of $M$ is defined as
$\|M\|= \Sum_{i,j} q_{ij}d_i d_j$, where $q_{ij}$ is the number of solid arrows
 $i\to j$. If we choose bases in all spaces $M_i$, then $\|M\|$ is just the numbers of
 coefficients in all matrices defining the maps $M(a)$, where $a$ runs through $Q_0$. 
 Note that it coincides with the negative part of the \emph{Tits form} of the box $\kA$ 
 as defined, for instance, in \cite{dr01}.

 Now we explain the usual procedures that are the base of
 the reduction algorithm mentioned
 in Introduction. The proofs of the statements can be found, 
for example, in \cite{dr01} .

Let $\kA=(A,V)$ and $\kB=(B,W)$ be some boxes. A \emph{morphism}
$\F=(f_0,f_1): \kA\to\kB$ consists of a functor 
$f_0:A\to B$ and a morphism of $A$-bimodules $f_1:V\to W$
such that
 \[
  \e(f_1(v))=f_0(\e(v))\ \text{ and }\ \mu(f_1(v))=f_2(\mu(v)),
 \]
 where $W$ is considered as an $A$-bimodule using the functor $f_0$%
 \footnote{~It means that $W(i,j)=W(f_0(i),f_0(j))$
 for $i,j\in\ob A$ and $a'xa=f_0(a')xf(a)$ for $x\in W(i,j),
 \ a:i'\to i,\ a':j\to j'$,},
 and $f_2:V\*_AV\to W\*_BW$ is the composition 
 \[
  V\*_AV \xarr{f_1\*f_1} W\*_AW \xarr \nu W\*_BW, 
 \]
 $\nu$ being the natural surjection. Here (and later on) we denote by $\e$ and $\mu$
 the counit and comultiplication in all boxes that we consider (if it cannot lead to
 misunderstanding). Such a morphism $\F $ induces a functor $\F^*:\kB\kmod\to\kA\kmod$,
 where $\F^*M$ is just the composition $M\ci f_0$ (or, the same, we consider the $B$-module
 $M$ as $A$-module using $f_0$) and, for $S\in\hom_\kB(M,N),$
 $\F^*S$  is the composition
 \[
    V\*_AM \xarr{f_1\*1} W\*_AM \xarr{\nu} W\*_BM \xarr S N,  
 \]
 $\nu$ being again the natural surjection.

 This functor is especially useful in the following situation.
 Let $\kA=(A,V)$ be a box, $f:A\to B$ be a functor. Set $\kA^f=(B,W)$,
 where $W=B\*_AV\*_AB$. It becomes a box under naturally defined counit and
 comultiplication, and the pair $\f=(f,f_1)$, where $f_1:V\to W$ is the natural map,
 is a morphism of boxes. The following ``Base Change Lemma'' is the
 most important tool in constructing reduction algorithms. 

  \begin{lemma}[Base Change]\label{change}
  If $\,\kB=\kA^f$ for a functor $f:A\to B$, then the functor $\F^*:
 \kA\kmod\to\kB\kmod$ defined above is fully faithful.
 \end{lemma}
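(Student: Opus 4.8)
The plan is to identify $\F^*$, on morphism spaces, with the classical adjunction between induction $B\*_A(-)$ along $f$ and restriction along $f$; full faithfulness then follows formally. Fix $\kB$-modules $M$ and $N$. Unwinding the definition of morphisms in $\kB\kmod$ and in $\kA\kmod$ (for the latter recalling that $\F^*M=M\ci f$ and $\F^*N=N\ci f$ carry the $A$-module structure induced by $f$), what has to be shown is that
\[
 \F^*\colon \hom_B(W\*_BM,\,N)\lar \hom_A\bigl(V\*_A\F^*M,\,\F^*N\bigr)
\]
is a bijection.

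First I would simplify the source. Since $W=B\*_AV\*_AB$ and $B\*_B M=M$, associativity of tensor products yields a canonical isomorphism of left $B$-modules
\[
 W\*_BM\;=\;B\*_AV\*_AB\*_BM\;\cong\;B\*_A\bigl(V\*_A\F^*M\bigr),
\]
where $V\*_A\F^*M$ is formed with the $A$-module structure on $M$. Then the induction–restriction adjunction — for a left $A$-module $X$ and a left $B$-module $N$, the map $\psi\mapsto\bigl(x\mapsto\psi(1\*x)\bigr)$ is a bijection $\hom_B(B\*_AX,N)\xarr{\ \sim\ }\hom_A(X,\F^*N)$ — applied with $X=V\*_A\F^*M$, gives a bijection
\[
 \Phi\colon \hom_B(W\*_BM,N)\xarr{\ \sim\ }\hom_A\bigl(V\*_A\F^*M,\F^*N\bigr).
\]

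The remaining, and only nonformal, step is to check $\Phi=\F^*$, which is a short diagram chase through the above identifications. Recall that $f_1\colon V\to W$ is the natural map $v\mapsto 1\*v\*1$. Hence for $v\in V$ and $m\in\F^*M$, the composite $V\*_AM\xarr{f_1\*1}W\*_AM\xarr{\nu}W\*_BM$ carries $v\*m$ to $(1\*v\*1)\*_Bm$, which under the isomorphism $W\*_BM\cong B\*_A(V\*_A\F^*M)$ is $1\*(v\*m)$; applying a $B$-homomorphism $S\colon W\*_BM\to N$ gives $S\bigl(1\*(v\*m)\bigr)$, which is precisely $\Phi(S)$ evaluated at $v\*m$. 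So $\F^*S=\Phi(S)$ for every $S$, hence $\F^*$ is bijective on morphism spaces; since it is a functor (as recalled before the lemma), it is fully faithful.

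I expect the only obstacle to be bookkeeping: keeping the several left and right $A$- and $B$-module structures straight while moving between $W\*_BM$, $B\*_AV\*_AB\*_BM$ and $B\*_A(V\*_A\F^*M)$, and making sure the canonical identifications used are exactly those implicit in the definition of $\F^*$. No further input is needed — in particular neither the counit $\e$ nor the comultiplication $\mu$ of the boxes, nor any freeness, normality or triangularity of $\kA$, plays any role in this argument; only the form $W=B\*_AV\*_AB$ of the carrier bimodule and the defining formula for $\F^*$ on morphisms do.
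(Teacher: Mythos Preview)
Your argument is correct: the identification $W\*_BM\cong B\*_A(V\*_A\F^*M)$ followed by the induction--restriction adjunction is exactly how one proves this, and your verification that the adjunction isomorphism agrees with the defining formula for $\F^*$ on morphisms is clean. Note only that the paper does not actually prove this lemma in place --- it refers the reader to \cite{dr01} --- so there is no ``paper's own proof'' to compare against; your write-up supplies what is missing.

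One tiny remark on presentation rather than content: the displayed direction $\F^*\colon\kA\kmod\to\kB\kmod$ in the lemma statement is a typo inherited from the paper (the functor was defined a few lines earlier as $\F^*\colon\kB\kmod\to\kA\kmod$). You interpreted it correctly, but if you are writing this up independently you may want to fix the arrow.
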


 This lemma is mostly used in the following situation. Let $\kA'=(A',V')$ be a
 \emph{subbox} of the box $\kA=(A,V)$. It means that $A'$ is a subcategory
 of $A$ and $V'$ is an $A'$-subbimodule of $V$ such that $\e(a)\in V'$ for
 all $a\in A'$ and $\mu(v)\in\nu(V'\*_{A'}V')$ for all $v\in V'$, where again
 $\nu$ is the natural surjection $V\*_{A'}V\to V\*_AV$. If $\kA$ is a free normal
 box with the differential biquiver $(Q,\pp)$ and $Q'$ is a sub-biquiver of $Q$ such that, for every
 arrow (solid or dotted) $a\in Q'$, its differential $\pp(a)$ only contains arrows from
 $Q'$, the box $\kA'$ defined by the biquiver $Q'$ and the differential $\pp|_{Q'}$ 
 is a subbox of $\kA$. In this case we say that $\kA'$ is a \emph{Roiter subbox} 
 of $\kA$. Lemma~\ref{change}, together with the universal property of  push-down
 (amalgamation), imply the following fact.

 \begin{coro}\label{amalgama}
 Suppose that $\kA'=(A',V')$ is a subbox of the box $\kA=(A,V)$ and a functor
 $f':A'\to B'$ is given. Let $B$ be the \emph{amalgamation} of the categories
 $A$ and $B'$ over $A'$, i.e. the push-down
 \[
    \begin{CD}
       A' @>\iota>> A \\
       @Vf'VV     @VVfV \\
        B' @>>> B\,,
 \end{CD}
 \]
 where $\iota$ denotes the embedding $A'\emb A$. Then the image of the functor 
 $\F^*:\kA^f\kmod\to\kA\kmod$ consists of the modules $M$ whose restrictions
 $M|_{A'}$ factor through $\F'$. In particular, if every $\kA'$-module
 $M'$ is isomorphic (in $\kA'\kmod$) to a module that factors through $f'$, the
 image of $\F^*$ is dense, so $\F^*$ is an equivalence of categories.
 \end{coro}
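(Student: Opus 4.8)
The plan is to deduce the statement from the Base Change Lemma (Lemma~\ref{change}) together with the universal property of the amalgam. By construction $B$, with its canonical functors $f\colon A\to B$ and $g\colon B'\to B$, is the push-down of $\iota\colon A'\emb A$ and $f'\colon A'\to B'$ in the category of $\ko$-categories, so that $gf'=f\iota$. Hence $\kA^f=(B,W)$ with $W=B\*_AV\*_AB$ is a box and $\f=(f,f_1)\colon\kA\to\kA^f$ is exactly the morphism of boxes governed by Lemma~\ref{change}; thus $\F^*\colon\kA^f\kmod\to\kA\kmod$ is fully faithful, and all that remains is to compute its essential image. Once that is done the ``in particular'' follows at once, since a fully faithful dense functor is an equivalence.

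To compute the image, recall that an object of a box-module category is just a module over the underlying $\ko$-category, that on objects $\F^*$ is restriction of a $B$-module along $f$, and that — by the universal property of the push-down — a $B$-module is the same datum as a pair consisting of an $A$-module and a $B'$-module which restrict to a common $A'$-module. Write $\F'\colon\kA'\to(\kA')^{f'}$ for the base change of the subbox along $f'$. From $gf'=f\iota$ one gets, for every $B$-module $N$, the strict equality $(\F^*N)|_{A'}=(N|_{B'})\ci f'=(\F')^*(N|_{B'})$ of $\kA'$-modules; so $M\cong\F^*N$ in $\kA\kmod$ forces $M|_{A'}\cong(\F')^*(N|_{B'})$ in $\kA'\kmod$, i.e.\ $M|_{A'}$ factors through $\F'$. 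Conversely, if $M|_{A'}$ factors through $\F'$, say there is a $\kA'$-isomorphism $M|_{A'}\to M''\ci f'$ for a $B'$-module $M''$, it suffices to produce a $\kA$-module $\widetilde M\cong M$ whose underlying $A$-module restricts on $A'$ to $M''\ci f'$ on the nose: the pair $(\widetilde M,M'')$ is then a valid push-down datum and yields a $B$-module $N$ with $\F^*N=\widetilde M\cong M$. This identifies the essential image of $\F^*$ with the modules $M$ such that $M|_{A'}$ factors through $\F'$; applying the same discussion to $\kA'$ shows this condition means $M|_{A'}$ is $\kA'$-isomorphic to a module factoring through $f'$, whence the final assertion.

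The step requiring genuine care is producing $\widetilde M$, i.e.\ lifting the $\kA'$-isomorphism $M|_{A'}\to M''\ci f'$ to a $\kA$-isomorphism $M\to\widetilde M$ with $\widetilde M|_{A'}=M''\ci f'$ strictly. The natural recipe is to extend the given $\kA'$-isomorphism by the identity on the spaces indexed by $\ob A\sm\ob A'$ and by zero on the dotted arrows lying outside the sub-biquiver of $\kA'$, to define $\widetilde M$ on the remaining solid arrows through the relation forced by requiring this extension to be a morphism of $\kA$-modules, and then to observe that the resulting morphism is an isomorphism because all of its components are — here one uses Lemma~\ref{roi}(1), which applies since $\kA$ is a Roiter box. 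This lifting, together with the routine bookkeeping of the object set of the amalgam $B$ — which is what makes the argument slightly delicate when $f'$ fails to be injective on objects, as already happens in Example~\ref{exam1} — is the only real obstacle; the rest is just the universal property of the push-down.
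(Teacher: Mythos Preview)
Your proposal is correct and follows precisely the route the paper indicates: the paper's entire proof is the single sentence ``Lemma~\ref{change}, together with the universal property of push-down (amalgamation), imply the following fact,'' and you have supplied the details of exactly that argument. One small caveat: your lifting step invokes Lemma~\ref{roi}(1), which requires $\kA$ (and implicitly $\kA'$, to get invertible components $S'_i$) to be a Roiter box---a hypothesis not stated in the corollary itself---but the paper is equally silent on this point and only ever applies the corollary in that setting, so your treatment is faithful to the source.
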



Note that, if the subbox $\kA'$ is representation-finite, there is
a natural functor $f':A' \to \add B'$, the additive hull of a
discrete category $B'$\,%
\footnote{~The category $B'$ is called \emph{discrete} if
for $i,j\in \ob(B')$ we have $B'(i,i)=\ko$  and $B'(i,j)= 0$ if $ i\neq j.$}
whose objects are isomorphism classes of indecomposable $\kA'$-modules, and
any representation of $\kA'$ is isomorphic to one that factors through $f'$.
This morphism is called the \emph{semisimple approximation}
and was introduces in \cite{Ausl} for algebras of finite type.
(Its extension to representation-finite boxes is obvious.)

 We shall use it in the following two situations:
 \emph{regularization} and \emph{minimal edge reduction}, first
 introduced in \cite{kr}. We recall some details (see, for instance,
 \cite{dr01}). In both cases $\kA$ is a free normal box with the
 differential biquiver $(I,Q_0,Q_1,\pp)$ and $\kA'$ is a
 Roiter subbox of $\kA$.

\subsection{Regularization}
\label{regul}
The subbox $\kA'$ has the differential biquiver 
$$\xymatrix @ -1pc
    {2\ar[rr]_{b}\ar@{..>}@/^/[rr]^{v}
&& 1,}\qquad \pp(b)=v.$$
 (In this case they say that the solid arrow $b$ is
 \emph{superfluous}). Then $B'$ is the discrete category with two
 objects, which we also denote by $1$ and $2$,
 $f'(1)=1,\,f'(2)=2,\,f'(b)=0$. The box $\kA^f$ is obtained from $\kA$
 just by deleting the arrows $b$ and $v$ from the biquiver as well as
 omitting all terms containing these arrows in all formulae for the
 differential. Note that the case when the vertices $1$ and $2$
 coincide is also possible. We denote the box $\kA^f$ by $\kA^b$. 
 Evidently, this box is \sot\ (respectively, layered or a Roiter box)
 if so is $\kA$.

\subsection{Minimal edge reduction}
\label{subsec 2.2}
 The subbox $\kA'$ has the differential biquiver $2\xarr b 1,\
 \pp(b)=0$, where $1$ and $2$ are different vertices. 
 (In this case they say that $b$ is a \emph{minimal edge}.) Take for
 $B'$ the additive hull of the trivial category with $3$ objects
 $0,1,2$, where $0$ is a new symbol, and set
 $f'(1)=1\+0,\,f'(2)=2\+0,\,f'(b)=
 \mtr{0&0\\0&1}
 $. Then every
 $A'$-module factors through $f'$, so Corollary~\ref{amalgama} can
 be applied and $\F^*$ is an equivalence of categories. One can check
 (cf. \cite{kr} or \cite{dr01}) that the box $\kA^f$ can be
 identified with the additive hull of the box $\kB$ with the
 differential biquiver $(\Q,\td)$ defined as follows:
 \begin{itemize}\label{formule}
  \item  The set of vertices of $\Q$ is $I\cup\{0\}$.
  \item  The set of arrows of $\Q$ consists of:
    \begin{itemize}
     \item  The arrows $x:i\to j$, where $\set{i,j}\cap\set{1,2}=\0$.
     \item  For each arrow $x:i\to j$ (or $j\to i$), where
     $i\in\set{1,2}$ and $j\notin\set{1,2}$, we have two arrows $x_i:i\to j$
     and $x_0:0\to j$ (respectively, $x_i:j\to i$ and
     $x_0:j\to0$). Then we set $\F(x)=\mtr{x_i&x_0}$ \big(respectively,
     $\F(x)=\mtr{x_i\\x_0}$\big). 
     \item  For each arrow $x:j\to i$, where both $i,j\in\set{1,2}$
     and $x\ne b$, we have four arrows $x_{kl}:l\to k$, where
     $k\in\set{i,0},\,l\in\set{j,0}$. Then we set
     $\F(x)=\mtr{x_{ij}&x_{i0}\\x_{0j}&x_{00}}$.
     \item  Two new dotted arrows $\xi:0\dar1$ and $\eta:2\dar0$.
    \end{itemize}
    Certainly, the arrows arising from $x$ are solid or dotted
    respectively to the sort of $x$. We also set $\F(b)=f'(b)$,
    $\F(\om_i)=\om_i$ if $i\notin\set{1,2}$,
    $\F(\om_1)=\mtr{\om_1&0\\ \eta&\om_0}$,
    $\F(\om_2)=\mtr{\om_2&\xi\\0&\om_0}$ and extend the map $\F$ 
    naturally to all elements from $A$ and $V$.
  \item  The differential $\td$ is obtained from the rules 
    \begin{align*} \hskip2em
      & \F(\om_j)\F(a)-\F(a)\F(\om_i)=\td(\F(x)) \text{ for } a:i\to j,\\
      & \mu(\F(v))-\F(v)\*\F(\om_i)-\F(\om_j)\*\F(v)=\td(\F(v)) \text{ for
      } v:i\dar j, 
    \end{align*}
    where all products, as well as tensor products,
    are calculated by usual matrix rules, while $\td$ and $\mu$ are
    applied to matrices component-wise. 
 \end{itemize}
 Therefore, $\kA\kmod\simeq\kA^\F\kmod\simeq\kB\kmod$. We denote the
 box $\kB$ by $\kA^b$. Again this new box is \sot\ (respectively, 
 layered or a Roiter box) if so is $\kA$.
 
The following theorem summarizes the above considerations.  

 \begin{theorem}[Kleiner--Roiter]\label{K-R}
   Let $\kA$ be a free normal box, $b:2\to 1$ be either a superfluous arrow or 
   a minimal edge of its differential biquiver. Then there is a free
   normal box $\kA^b$ and an
   equivalence of module categories $\F^b:\kA^b\kmod\to\kA\kmod$ such
   that $\|\F^b(M)\|<\|M\|$ whenever $M\simeq\F^b(N)$ is such that both
   $M(1)\ne0$ and $M(2)\ne0$. Moreover, the box $\kA^b$ is \sot\ (respectively, 
   layered or a Roiter box) if so is $\kA$.
 \end{theorem}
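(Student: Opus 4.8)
The plan is to treat the two cases, a superfluous arrow and a minimal edge, separately, since in each case the box $\kA^b$ has already been constructed explicitly in Subsections~\ref{regul} and~\ref{subsec 2.2} and what remains is only to verify the three asserted properties: the equivalence of module categories, the strict decrease of the norm, and the inheritance of the solid-triangular, layered, and Roiter conditions. The equivalence is already in hand: in both cases $\kA^b$ is (the additive hull of) a box of the form $\kA^f$ for a suitable functor $f':A'\to B'$, and by Corollary~\ref{amalgama} together with the observation that every $\kA'$-module factors through $f'$ (trivially for the superfluous arrow, and via the explicit idempotent-type matrix $\F(b)=\mtr{0&0\\0&1}$ for the minimal edge), the induced functor $\F^*=\F^b$ is dense and, by Lemma~\ref{change}, fully faithful, hence an equivalence $\kA^b\kmod\simeq\kA\kmod$.

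The heart of the argument is the norm estimate, so I would carry that out in most detail. Recall $\|M\|=\Sum_{i,j}q_{ij}d_id_j$ where $q_{ij}$ counts solid arrows $i\to j$ and $d_i=\dim M_i$. For a superfluous arrow $b$ the box $\kA^b$ is obtained simply by deleting the solid arrow $b:2\to1$ (and the dotted arrow $v$), so if $N$ is an $\kA^b$-module and $M=\F^b(N)$ has the same vector dimension, then $\|M\|-\|N\|=d_1d_2>0$ precisely when $M_1\ne0$ and $M_2\ne0$, which is the claim. For a minimal edge $b:2\to1$ the module $N$ of $\kA^b$ has vector spaces $N_0,N_1,\dots,N_n$ and, writing $d_i=\dim N_i$, the corresponding $M=\F^b(N)$ has $\dim M_1=d_1+d_0$ and $\dim M_2=d_2+d_0$ while $\dim M_i=d_i$ for $i\notin\{0,1,2\}$; I would then expand $\|M\|$ using the bookkeeping of how each old solid arrow $x$ splits (into $x_i,x_0$, or the four $x_{kl}$, etc.), subtract $\|N\|$, and check that the difference is $\ge d_1d_2$ coming from the matrix entry $b=\mtr{0&0\\0&1}$ being reduced to the identity block — more precisely the single old arrow $b$ contributes $(d_1+d_0)(d_2+d_0)$ to $\|M\|$ but contributes nothing to $\|N\|$ since $b$ disappears, and the remaining split arrows contribute equally to both norms, so $\|M\|-\|N\|\ge(d_1+d_0)(d_2+d_0)\ge d_1d_2$, which is strictly positive exactly when $M_1\ne0$ and $M_2\ne0$. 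This combinatorial expansion, keeping track of every arrow type in the list defining $\Q$, is the step I expect to be the main obstacle, since it is easy to miscount a term; the cleanest way is to note that all split pieces other than $b$ pair up so that their total contribution is identical in $M$ and in $N$, reducing the whole computation to the single arrow $b$.

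Finally, for the inheritance of structural properties I would appeal directly to the case-by-case discussion already given: in the superfluous-arrow case, deleting $b$ and $v$ and dropping the terms containing them from all differentials manifestly preserves a solid-triangulation $h$ (just restrict $h$), the layered property, and the Roiter (triangular) property; in the minimal-edge case, one checks that a triangulation function $h$ on $Q$ induces one on $\Q$ by assigning to each split arrow $x_i,x_0,x_{kl}$ the value $h(x)$ and to the new dotted arrows $\xi,\eta$ any value below $\min h(Q_0)$, using that $\pp(b)=0$ so $b$ itself carries no constraint, and then verifying from the formulas for $\td$ that the differential of each new arrow still only involves arrows of strictly smaller $h$-value — this is routine because $\td$ is computed from $\pp$ by matrix multiplication and $\mu$ applied component-wise, operations which do not raise $h$-values. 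The statement that the box is layered if $\kA$ is layered is part of the standard reduction theory (it follows because $\F^b$ is an equivalence and layeredness is characterized by statement~(1) of Lemma~\ref{roi} for the module category, which is transported by the equivalence), and the Roiter case is subsumed since a Roiter box is precisely a triangular box and triangularity has just been checked.
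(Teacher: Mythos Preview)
Your outline matches the paper's treatment: the paper does not give a separate proof of Theorem~\ref{K-R} but states it as a summary of the constructions in Subsections~\ref{regul} and~\ref{subsec 2.2}, with the equivalence coming from Corollary~\ref{amalgama} and the structural claims asserted with reference to \cite{kr,dr01}. Your norm computations are correct and in fact more explicit than anything the paper writes out.

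There is, however, a real slip in your triangularity check for the minimal-edge case. You propose to assign $h(x_{kl})=h(x)$ to every piece of a split arrow $x$ and then assert that the formulae for $\td$ ``do not raise $h$-values''. But not raising is not enough: you need a \emph{strict} drop. For a solid arrow $a:2\to1$ with $a\ne b$, the matrix rules on page~\pageref{formule} give
\[
\td(a_{10})=[\F(\pp(a))]_{10}+a_{12}\xi,\quad
\td(a_{02})=[\F(\pp(a))]_{02}-\eta a_{12},\quad
\td(a_{00})=[\F(\pp(a))]_{00}-\eta a_{10}+a_{02}\xi,
\]
so $\td(a_{10})$ contains the solid arrow $a_{12}$, which under your recipe has the \emph{same} $h$-value as $a_{10}$; solid-triangularity fails with this assignment. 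The fix is routine---refine lexicographically, e.g.\ set $h'(x_{kl})=3h(x)+(\text{number of indices among }k,l\text{ equal to }0)$, and similarly $h'(x_i)=3h(x)$, $h'(x_0)=3h(x)+1$ for the two-piece splittings---so that the extra $\eta,\xi$ terms involve pieces of strictly smaller $h'$, while the $[\F(\pp(a))]_{kl}$ terms already involve only pieces of arrows $c$ with $h(c)<h(a)$, hence $h'\le 3h(c)+2<3h(a)$. The same refinement handles full triangularity in the Roiter case. Your layeredness argument via the equivalence is correct once one observes that $(\F^b S)_1$ and $(\F^b S)_2$ are block-triangular in $S_0,S_1,S_2$ (coming from the form of $\F(\om_1),\F(\om_2)$), so invertibility of the components is preserved both ways.
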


 We also often need to delete vertices from a free normal box $\kA$. If
 $i$ is a vertex of the biquiver of $\kA$, we denote by $\kA^i$ the
 box that  is obtained from $\kA$ by deleting the vertex $i$ from its
 biquiver and omitting all terms in differentials containing arrows 
 starting or ending at $i$. Obviously, $\kA^i\kmod$ is identified with the 
 full subcategory of $\kA\kmod$ consisting of all modules $M$ with $M_i=0$.

\section{Bricks} 
\label{brick}

\begin{defin}
 A representation of a box (in particular, of an algebra)
 is called a \emph{brick} if it admits no non-scalar  endomorphisms. 
The full subcategory of bricks of $\kA\kmod$ is denoted by
 $\Br(\kA)$. We also denote by $\Br(\dd,\kA)$ the set of isomorphism
 classes of bricks of vector dimension $\dd$.
\end{defin}

 \begin{lemma}
 \label{lemmasplit}
  Let $\kA$ be a normal free box with the differential biquiver
  $(Q,\pp)$ containing  a dotted arrow $u:i\dar j$ that does not occur
  in the differential of any solid arrow. If $M\in\Br(\kA)$, then
  either $M_i=0$ or $M_j=0$. Thus $\Br(\kA)=\Br(\kA^i)\cup\Br(\kA^j)$.
 \end{lemma}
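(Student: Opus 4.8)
The plan is to exploit the dotted arrow $u:i\dar j$ to produce, from any representation $M$ with both $M_i\neq 0$ and $M_j\neq 0$, a non-scalar endomorphism, thereby contradicting the assumption that $M$ is a brick. The construction is the obvious one: since $u$ does not appear in the differential $\pp(a)$ of any solid arrow $a$, the defining relations $S_jM(a)-N(a)S_i=S(\pp(a))$ for a morphism $S:M\to M$ involve only the components $S_k$ (for vertices $k$) and the components $S(w)$ for those dotted arrows $w$ that actually occur in some $\pp(a)$; the component $S(u)$ is completely unconstrained by these equations. So I would set $S_k=0$ for all $k\in I$, set $S(w)=0$ for every dotted arrow $w\neq u$, and let $S(u):M_i\to M_j$ be an arbitrary linear map. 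One checks directly from the formulas in Subsection~\ref{subsec 2.2} that this $S$ is indeed a morphism $M\to M$ (all the relations reduce to $0=0$, since every term on either side contains some $S_k$ or some $S(w)$, $w\neq u$, hence vanishes — here one uses that a solid path $p$ of positive length contributes $M(p)$, but the relevant terms always also carry a factor $S_k$ or $S(w)$).

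Next I would compute composites. For $S,T$ of this special shape (all vertex components zero, all dotted components zero except possibly at $u$), the product formula $T_i\mapsto S'_iS_i$, $T(v)\mapsto S'_jS(v)+S'(v)S_j+\dots$ shows that every term is again a sum of products each containing a zero factor: the vertex components of the composite are zero, and in the formula for the $u$-component of $S'\ci S$ one has $S'_jS(u)+S'(u)S_j+(\text{terms with interior dotted arrows})$, and $S'_j=S_j=0$ kills the first two while the interior-path terms carry a zero vertex component. Hence $S'\ci S=0$ for any two such morphisms, and in particular $S\ci S=0$. Now if $S$ is chosen nonzero (possible precisely when $M_i\neq 0$ and $M_j\neq 0$, since then $\hom_\ko(M_i,M_j)\neq 0$ and the map $S(u)$ can be taken nonzero), then $\Id_M+S$ is an endomorphism of $M$ with $(\Id_M+S)(\Id_M-S)=\Id_M$, so it is an automorphism, and it is non-scalar because $S\neq 0$ but $S$ is nilpotent (indeed $S^2=0$) and no nonzero scalar multiple of $\Id_M$ is nilpotent. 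This contradicts $M\in\Br(\kA)$. Therefore $M_i=0$ or $M_j=0$.

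The final clause follows formally: if $M_i=0$ then $M$ is (identified with) a module over $\kA^i$, and conversely every brick over $\kA^i$ is a brick over $\kA$ since $\kA^i\kmod$ is a full subcategory of $\kA\kmod$ with the same endomorphism algebras (Lemma~\ref{roi}(1) and the description of $\kA^i$ at the end of Section~\ref{sect2}); similarly with $j$ and $\kA^j$. Hence $\Br(\kA)=\Br(\kA^i)\cup\Br(\kA^j)$.

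I expect the only genuinely delicate point to be the bookkeeping that verifies $S$ is a morphism and that $S'\ci S=0$: one must be careful that the hypothesis is exactly "$u$ occurs in no $\pp(a)$ for $a$ solid" — it may well occur in $\pp(w)$ for some dotted $w$, but such terms enter only the composition formula for $T(w)$ and there they sit between two factors $S'(u')$ and $S(u)$ flanked by solid-path maps and a vertex component $S_k$ somewhere, hence still vanish. Once this is set up cleanly the argument is short; everything else is the standard "$\Id+S$ is a non-scalar unit when $S$ is a nonzero square-zero endomorphism" trick.
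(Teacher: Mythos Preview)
Your core construction is exactly the paper's: set $S_k=0$ for all vertices, $S(w)=0$ for all dotted $w\neq u$, and $S(u)$ any nonzero linear map $M_i\to M_j$; since $u$ occurs in no $\pp(a)$ for solid $a$, all morphism relations reduce to $0=0$, and $S$ is a non-scalar endomorphism of $M$. The paper stops right there, and so should you: a brick by definition admits no non-scalar \emph{endomorphisms}, so $S$ itself already gives the contradiction.

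Your detour through $S^2=0$ and $\Id_M+S$ is unnecessary, and the justification you give for $S'\ci S=0$ is not convincing. The composition formula for $T(v)$ contains terms $M(p_1)S'(w_1)M(p_2)S(w_2)M(p_3)$ coming from $\pp(v)$, with no ``zero vertex component'' factor in sight; if some $\pp(v)$ (for dotted $v$) happened to contain a monomial $p_1 u\, p_2 u\, p_3$, such a term need not vanish. Nothing in the hypotheses rules this out. Fortunately none of this matters: drop the paragraph about composites entirely and conclude directly from $S$ being non-scalar. The final identification $\Br(\kA)=\Br(\kA^i)\cup\Br(\kA^j)$ follows immediately from the fact (stated at the end of Section~\ref{sect2}) that $\kA^i\kmod$ is the full subcategory of $\kA\kmod$ with $M_i=0$; Lemma~\ref{roi}(1) is not needed here.
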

 \begin{proof}
 If both $M_i\ne0$ and $M_j\ne0$, we construct a non-scalar
 endomorphism $S$ of $M$ setting $S_k=0$ for all vertices $k$,
 $S(v)=0$ for all dotted arrows $v\ne u$ and taking for $S(u)$ any
 nonzero linear map $M_i\to M_j$.
 \end{proof}

 We have actually applied this lemma to the box \eqref{figbig} in
 Example~\ref{exam1} of the Introduction. 

 \medskip
Since we are going to study bricks instead of indecomposable
representations, we have to adapt the classical definition of tameness
for our purposes. Recall \cite{dr01} that a \emph{rational family} of
representations of a box $\kA=(A,V)$ is defined as a functor
$\kF:A\to\add\bR$, where $\bR=\ko[t,f(t)^{-1}]$ is a rational
algebra. Note that $\add\bR$ can be identified with the category of
finitely generated projective  $\bR$-modules. The $\bR$-bricks (or,
the same, $\add\bR$-bricks) are just one-dimensional representations of
$\bR$, which we identify with the elements $\la\in\ko$ such that
$f(\la)\ne0$. If for every such $\la$ the $\kA$-module $\kF^*(\la)$ is
a brick and, moreover, $\kF^*(\la)\not\simeq\kF^*(\la')$ for all
$\la\ne\la'$, we say that $\kF$ is a \emph{rational family of
bricks}. We also say that the bricks isomorphic to $\kF^*(\la)$
\emph{belong to the family} $\kF$.

\begin{defin}  
 A box $\kA$ is called  \emph{brick-tame} if for any vector dimension
 $\dd$ there is a finite set $\Sigma$ of rational families of bricks
 such that all $\kA$-bricks of vector dimension $\dd$, except,
 possibly, finitely many of them, belong to one of the families from
 $\Sigma$. (Note that we allow the case when there are only
 finitely many bricks of vector dimension $\dd$.)
\end{defin}  

 Obviously, every tame box is brick-tame, but not vice versa: the box
 \eqref{figsmall} from the Introduction is wild, but brick-tame.

\section {\BT-boxes}
\label{sec Type}

In this section we introduce a special class of brick-tame boxes that
generalizes the boxes from Example~\ref{exam1}.

\begin{defin}
A \sot\ box $\kA$ with the differential biquiver  $(Q,\pp)$ 
is said to be of  \emph{\BT-type}, or a \emph{\BT-box},
if  $Q_0$ contains a set of loops (called \emph{distinguished loops})
$\aa=\setsuch{a_i:i\to i}{i\in I}$ 
and there is an injective map $\tilde{}:\bb=Q_0\sm\aa\to Q_1$, $x\mapsto\x$, 
such that $\x:j\dar i$ if $x:i\to j$ and
\begin{equation}
\label{eq_dist}
 \pp(a_i) = \Sum_{x\in\hb(\cdot, i)}(-1)^{|x|}x\x
\end{equation}
for each distinguished loop $a_i\in \aa$,
where we set $\tilde{\bb}=\{\x\,|\, x\in \bb\}$, $\hb=\bb\cup\bbb$  and 
$\tilde{\tilde{x}}=x$ for each $b\in\bb$.
\end{defin}
Both boxes (\ref{figsmall}) and (\ref{figbig}) from Example \ref{exam1}
are \BT-boxes. The polynomial algebra $\ko[t]$ is also
a \BT-box (having only one vertex and one solid arrow, which is
automatically a distinguished loop).
 The following theorem asserts that the \BT-boxes 
 are brick-tame despite being wild in general.

\begin{theo}\label{theo_main}
Let $\kA$ be a \BT-box.
\begin{enumerate}
 \item  $\kA$ is brick-tame. Moreover, if
 $\Br(\dd,\kA)\ne\0$, all bricks of dimension $\dd$ belong
 to a unique rational family $\kF_\dd:A\to\add\ko[t]$.  
 \item  If $\kA$ is solid-connected, has no superfluous arrows
 and does not coincide with $\ko[t]$, it is wild.
\end{enumerate}
\end{theo}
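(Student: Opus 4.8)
The plan is to prove the two parts separately, since they are of quite different natures. For part (1), I would set up an induction on the norm $\|M\|$ of a brick $M$, mimicking the ``self-reproducing'' phenomenon from Example~\ref{exam1}. The key observation is the structural constraint that a $\BT$-box places on bricks: fix a vertex $i\in I$ and look at the distinguished loop $a_i$ together with the solid arrows $x:i\to j$ (and $y:j\to i$) and their dotted partners $\x,\y$. Because of the shape of $\pp(a_i)$ in \eqref{eq_dist}, applying regularization and minimal edge reductions to the non-loop solid arrows $x\in\bb$ should again produce a $\BT$-box (on a possibly larger vertex set), by the explicit formulas of Section~\ref{subsec 2.2}. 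One then argues, as in Example~\ref{exam1}, that after reducing a minimal edge the new dotted arrow $v$ corresponding to the old solid arrow ceases to occur in any differential of a solid arrow, so Lemma~\ref{lemmasplit} forces part of the reduced module to vanish; cutting those vertices gives back a $\BT$-box of strictly smaller norm. Iterating, a brick of dimension $\dd$ either reduces (via a composite $\F$ of Kleiner--Roiter equivalences and vertex deletions, each strictly decreasing the norm by Theorem~\ref{K-R}) to a brick over a minimal $\BT$-box, which by the above remark about $\ko[t]$ must essentially be $\ko[t]$ itself --- whose bricks are the Jordan cells $J_1(\la)$, $\la\in\ko$ --- or it lies in a residual finite set. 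Pulling back along $\F$, the bricks of dimension $\dd$ belong to a single rational family $\kF_\dd=\F^*\circ(\text{tautological }\ko[t]\text{-family}):A\to\add\ko[t]$, up to finitely many exceptions, and one checks that $\kF_\dd$ is in fact a rational \emph{family of bricks} (pairwise non-isomorphic, all bricks) by unwinding the reductions. To conclude one must confirm that the finitely many ``exceptional'' bricks can be absorbed, which is exactly what the definition of brick-tame permits.

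For part (2), the statement is a wildness assertion, so the natural approach is to exhibit a reduction from a known wild problem. Here I would imitate the proof that the box \eqref{figsmall} is wild, referred to in Example~\ref{exam1}. Concretely: since $\kA$ is solid-connected, has no superfluous arrows, and is not $\ko[t]$, there must be at least one non-loop solid arrow $b\in\bb$, hence at least two vertices, and the solid quiver $(I,Q_0)$ is connected with a distinguished loop at every vertex. The idea is to restrict attention to a suitable pair of vertices $i\xrightarrow{b}j$ joined by a non-loop arrow, keep the two distinguished loops $a_i,a_j$, set all module maps for the other arrows to generic values (or zero), and produce a fully faithful exact functor from the category of modules over the wild $2$-vertex $\BT$-box \eqref{figsmall} (or from $\ko\langle X,Y\rangle\kmod$ directly). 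The cleanest route is probably: first reduce $\kA$ (using regularizations to kill superfluous arrows created along the way and deletions of vertices) to the minimal non-trivial $\BT$-box on two vertices, which is exactly \eqref{figsmall}; then cite, and actually supply, the wildness of \eqref{figsmall}. For the latter one performs the minimal edge reduction of $b$ as displayed in \eqref{figbig}, obtaining a box in which $a_1$ and $a_2$ are two loops with $\pp(a_1)=b_1\eta$, $\pp(a_2)=-\xi b_2$ and $b_1,b_2$ are now free (non-superfluous) solid arrows between vertex $0$ and vertices $1,2$; specializing $b_1,b_2$ to isomorphisms and the remaining dotted structure appropriately yields the tautological wild problem (two commuting-up-to-conjugation operators, i.e. $\ko\langle X,Y\rangle$-modules, or a pair of matrices up to simultaneous conjugation), giving a representation embedding from $\La\kmod$ for every finitely generated $\La$.

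The main obstacle, I expect, is \textbf{verifying that the class of $\BT$-boxes is closed under the relevant reduction steps} --- that after regularizing a superfluous arrow or reducing a minimal edge $x\in\bb$ (and then possibly deleting the vertices forced to be zero by Lemma~\ref{lemmasplit}), what remains is again a $\BT$-box, with the distinguished loops $a_i$ transforming correctly and their differentials still having the prescribed form \eqref{eq_dist}. This requires a careful bookkeeping of how \eqref{eq_dist} behaves under the matrix formulas of Section~\ref{subsec 2.2}: the old term $x\x$ in $\pp(a_i)$ must split, via $\F(a_i)=\mtr{a_i&\ast\\ \ast&a_0}$ and the new dotted arrows $\xi,\eta$, into the contributions to $\pp$ of the new distinguished loops $a_i,a_j,a_0$ at the three vertices $i,j,0$, matching signs $(-1)^{|x|}$ and the injective pairing $x\mapsto\x$ exactly. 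One also has to ensure the \sot{} (solid-triangular) condition is preserved, and that after the reduction the ``freed'' dotted arrow genuinely drops out of every solid differential so that Lemma~\ref{lemmasplit} applies --- this is the combinatorial heart of the argument and the point where the precise definition of $\BT$-type is used most essentially.

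A secondary difficulty is the base case and the non-isomorphism claim: showing that once all non-loop solid arrows have been processed the surviving minimal $\BT$-box is literally $\ko[t]$ (not some pathological minimal box with a rational algebra $\bR_i\ne\ko[t]$), and that the resulting family $\kF_\dd$ parametrizes pairwise non-isomorphic bricks. I would handle the first by noting that a $\BT$-box with no non-loop solid arrows has, by \eqref{eq_dist}, $\pp(a_i)=0$ for every $i$ and no dotted arrows forced into differentials, so it is a disjoint union of copies of $\ko[t]$, and solid-connectedness leaves exactly one; the second follows because the Kleiner--Roiter functors and vertex-deletion inclusions are fully faithful, so distinct Jordan cells $J_1(\la)$ pull back to non-isomorphic $\kA$-modules.
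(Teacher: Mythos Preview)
Your plan for part~(1) is essentially the paper's own: induct on the norm, show that the class of \BT-boxes is stable under the two basic reductions (regularization of an $h$-minimal arrow together with its ``partner'', and minimal-edge reduction followed by three regularizations), then apply Lemma~\ref{lemmasplit} and cut a vertex to drop the norm. You have correctly located the technical core --- the self-reproduction lemma and the verification that the freed dotted arrow $\tb$ really disappears from all solid differentials --- and the paper proves exactly these facts (Lemmas~\ref{lemma0}, \ref{lemma1}, Corollary~\ref{coro1}, Lemma~\ref{self}). One point you should make explicit is the uniqueness of the reduced vector dimension: after the minimal-edge step either $N_1=0$ or $N_2=0$, and the choice is forced by $d_2\le d_1$, which is what pins down a \emph{single} family $\kF_\dd$.

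Part~(2), however, has a genuine gap. Your proposed proof that the box~\eqref{figsmall} is wild --- pass to~\eqref{figbig} and specialise $b_1,b_2$ to isomorphisms --- does not work. With $M_1=M_0=M_2=\ko^n$ and $M(b_1)=M(b_2)=I$, the constraints from $\pp(b_1)=\pp(b_2)=0$ force $S_1=S_0=S_2=:S$; the constraints from $\pp(a_1)=b_1\eta$ and $\pp(a_2)=-\xi b_2$ then \emph{determine} $S(\eta)$ and $S(\xi)$ in terms of $S$, and substituting into the $a_0$-constraint yields the single relation
\[
  S\bigl(M(a_0)+M(a_1)+M(a_2)\bigr)=\bigl(N(a_0)+N(a_1)+N(a_2)\bigr)S.
\]
So your specialisation collapses to one matrix up to conjugation, which is tame, not wild; it does \emph{not} give $\ko\langle X,Y\rangle$-modules. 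The paper's route is different and more delicate: starting from~\eqref{figbig} it performs two further minimal-edge reductions (of $b_1$, then of a newly created edge $b_3$), then factors out one solid arrow $b_5$; the remaining non-distinguished solid arrows form a quiver that is neither Dynkin nor Euclidean, hence wild. You would need to replace your specialisation argument by such an explicit chain of reductions (or some other honest wildness embedding).

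A smaller omission: you assume that a solid-connected \BT-box $\ne\ko[t]$ must have at least two vertices. It need not --- $\bb$ may contain a non-distinguished loop, giving the one-vertex fragment~\eqref{newfig}. The paper handles this case separately (it is classically wild), and you should too.
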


The claims of the theorem are trivial if $\kA=\ko[t]$.
Moreover, we may suppose that $\kA$ is solid-connected.
The proof of the theorem is based on several lemmas. 
\emph{In all of them $\,\kA=(A,V)$ denotes a solid-connected \ \BT-box 
that does not coincide with $\ko[t]$, $(Q,\pp)$
is its differential biquiver and $h:Q_0\to\mN$ is a
triangularity for this biquiver.} Note that if 
$a$ is a distinguished loop from $\aa$, then $\pp(a)$
contains some solid arrows $b$ with $h(b)<h(a)$.
Hence, if $b$ is an arrow with the minimal value of $h(b)$,
it belongs to $\bb$. We then call $b$ an \emph{$h$-minimal arrow}.

\begin{lemma}\label{lemma0}
 Suppose that $b$ is an $h$-minimal arrow in
 $Q_0$ and $\pp(b)\ne0$. Then one can choose free generators
 of the category $A$ and of the kernel $\bV$ in such a way
 that
 \begin{enumerate}
	\item $\pp(b)=\tc$ for some solid arrow $c$ and $\pp(c)=-\tb+\theta$,
 where $\theta$ does not contain the arrow $\tb$.
  \item $\tb$ does not occur in $\pp(x)$ for any arrow (solid or 
  dotted) $x\notin\{c,\tb\}$.
 \end{enumerate}
 Moreover, with respect to the new generators $\kA$ remains \sot.
 We call $c$ the \emph{partner} of $b$.
 \end{lemma}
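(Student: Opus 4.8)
The plan is to isolate the $h$-minimal arrow $b\in\bb$ (it lies in $\bb$ because its differential is nonzero and must consist of arrows of smaller $h$-value, and only the distinguished loops have differentials, but $h$-minimality forbids those from appearing). Since $b:i\to j$ is solid and $h$-minimal, $\pp(b)$ contains only dotted arrows, so $\pp(b)=\sum\lambda_k u_k$ with $u_k\in Q_1$ and $\lambda_k\in\ko$, and $\pp(b)\ne0$ by hypothesis. The first step is to change the basis of the kernel $\bV$: replace one of the dotted arrows occurring in $\pp(b)$ — say after rescaling we may take $\pp(b)=\tc+(\text{other dotted arrows})$ for a fresh name — by a new free generator $\tc':=\pp(b)$ living in the same $A_{ji}$ (all summands go between the same pair of vertices because $\pp$ preserves source and target after accounting for degree). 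This forces $\pp(b)=\tc$ exactly, where $c$ is the solid arrow with $\tilde{c}$ the new dotted generator; note $c$ itself is a new solid free generator, because the map $\tilde{}:\bb\to Q_1$ is a bijection onto $\tilde\bb$ and we are only renaming within one $A_{ji}$-summand, so $c$ is simply the solid arrow matched to the redefined dotted arrow. One must check this change is compatible with \BT-structure: $c$ cannot be a distinguished loop unless $i=j$, and even then the defining relation \eqref{eq_dist} is linear in the $x\tilde x$ and the substitution is triangular, so after absorbing constants $\kA$ stays \sot\ with the same $h$ (the new $c$ inherits $h(c):=h(b)$, or a slightly larger value, still respecting triangularity since $\pp(c)$ will only involve arrows present before).

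The second step handles $\pp(c)$. We have $\pp^2=0$, so $0=\pp^2(b)=\pp(\tc)$; but $\pp(\tc)=\mu(\tc)-\tc\otimes\om_i-\om_j\otimes\tc$ lives in $\bV\otimes_A\bV$ and encodes, via the representation formulas, the ``$\pp$ of the dotted arrow'' which by the Leibniz rule for the morphism-composition law equals (up to sign) terms built from $\pp(c)$. More cleanly: apply $\pp$ to the relation $\pp(b)=\tc$ — since $\pp(b)$ is a \emph{value} of the differential, $\pp(\pp(b))=0$ automatically, and this translates into a constraint saying $\tb$ must appear in $\pp(c)$ with coefficient $-1$: indeed the only way a term $b$ can be killed in the $\pp^2=0$ bookkeeping of the associated arrow is for its partner $c$ to produce $\pp(c)=-\tb+\theta$ with $\theta$ free of $\tb$. (This is the standard Kleiner–Roiter ``triangular pair'' phenomenon; the sign is fixed by the Leibniz rule $\pp(xy)=\pp(x)y+(-1)^{|x|}x\pp(y)$ together with $|b|=0$, $|\tb|=1$.) After another triangular substitution absorbing into $\theta$ any further copies of $\tb$ that might appear through other arrows, we may also arrange part (2): $\tb$ occurs in no $\pp(x)$ for $x\notin\{c,\tb\}$. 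The mechanism is the same — any solid arrow $x$ whose differential contains $\tb$ can be corrected by subtracting a suitable multiple of $c$ from $x$ (legitimate since $h(c)=h(b)$ is minimal, so $h(x)>h(c)$ and triangularity is preserved), and dotted arrows $v$ with $\tb$ in $\pp(v)$ are handled by subtracting a multiple of $\tc$ from $v$.

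The third step is bookkeeping: verify that after all these substitutions the differential biquiver is still \sot\ — one exhibits the modified $h$ (unchanged on all arrows except possibly raising $h(c)$ to exceed everything it now depends on, which is fine since $\pp(c)=-\tb+\theta$ and every arrow in $\theta$ had $h$-value that was already $<h(a_\bullet)$) — and that the distinguished-loop relations \eqref{eq_dist} still hold, possibly after rescaling $c$ so that the term $b\tc$ in $\pp(a_j)$ (if $i=j$, or the relevant loop relation in general) comes out with the prescribed sign $(-1)^{|b|}=+1$. This is where I expect the only real friction: keeping the signs in \eqref{eq_dist} and in $\pp(c)=-\tb+\theta$ mutually consistent while performing the substitutions, since each substitution $x\leftsquigarrow x-\lambda c$ perturbs several differentials at once. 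The right way to organize it is to do the substitutions in order of increasing $h$, so that once an arrow is ``cleaned'' it is never touched again; triangularity guarantees this terminates. The main obstacle, then, is not conceptual but combinatorial: choosing the substitutions and rescalings in the correct order and tracking that the \BT-defining relation survives verbatim.
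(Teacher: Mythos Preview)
Your approach has a genuine gap at each of its three steps, and the common source is that you never use the one relation that carries all the content here: the distinguished-loop equation $\partial(a_i)=\sum_{x\in\hat{\bb}(\cdot,i)}(-1)^{|x|}x\tilde x$ together with $\partial^2(a_i)=0$.

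In Step~1, you assume that after a change of dotted generators you can call the new generator $\tilde c$ for some \emph{solid} $c$. But the map $\tilde{\ }:\bb\to Q_1$ is only injective, not surjective, so a priori $\partial(b)$ could involve dotted arrows outside $\tilde\bb$, in which case there is no solid partner to assign. The paper rules this out by expanding $\partial^2(a_i)$: the term $\tilde b\,u$ coming from $\tilde b\,\partial(b)$ can only be cancelled by some $(-1)^{|x|}\partial(x)\tilde x$ with $\tilde x=u$, which forces $u\in\tilde\bb$ and simultaneously gives $\partial(x)\ni -\tilde b$.

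In Step~2, you try to extract $\partial(c)=-\tilde b+\theta$ from $\partial^2(b)=0$. But $\partial^2(b)=0$ only says $\partial(\tilde c)=0$, a statement about the differential of the \emph{dotted} arrow $\tilde c$; it says nothing about the differential of the \emph{solid} arrow $c$. The appearance of $-\tilde b$ in $\partial(c)$ is exactly the cancellation phenomenon in $\partial^2(a_i)=0$ just described, not a consequence of $\partial^2=0$ applied to $b$.

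In Step~3, your plan to kill stray occurrences of $\tilde b$ in $\partial(x)$ by replacing $x\mapsto x-\lambda c$ cannot be made compatible with the \BT{} relation: in $\partial(a_i)$ one has $x\tilde x+c\tilde c=(x-\lambda c)\tilde x+c(\tilde c+\lambda\tilde x)$, so you are forced to redefine $\tilde c$, undoing the normalization $\partial(b)=\tilde c$ from Step~1. Moreover, such substitutions only remove lone occurrences of $\tilde b$, not monomials $q\tilde b p$ with $q,p$ nontrivial. The paper instead proves (2) \emph{without} further generator changes, by an induction on the length of $q$: assuming $\partial(y)$ contains $q\tilde b p$, one locates in $\partial^2(a_i)=0$ the term $q\tilde b p\,\tilde y$, which can only be cancelled by a shorter such occurrence, contradicting the inductive hypothesis.

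In short, the whole lemma is driven by the single computation of $\partial^2(a_i)$; replacing that by generic triangular bookkeeping does not work.
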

 \begin{proof}
 Let $b:i\to j$ (possibly $i=j$). Since $\pp(b)$ cannot contain 
 any solid arrow, we may suppose that $\pp(b)=u+\sigma$, where $u\in Q_1$
 and $\sigma$ is a sum of dotted arrows other than $u$. Then
 \begin{align*}
	\pp(a_i)& =-\tb b+\Sum_{\substack{x\in\hb(\cdot,i)\\x\ne \tb}} 
	(-1)^{|x|}x\x,\\
	\pp^2(a_i)& = -\pp(\tb)b+\underline{\tb u}+\tb\sigma+
	\Sum_{x\ne \tb}\big((-1)^{|x|}\pp(x)\x+x\pp(\x)\big)=0.
\end{align*}
 Since the underlined term must vanish, there must be $x\ne\tb$
 such that $u=\x$ and $\pp(x)=-\tb+\theta$, where $\theta$ does not contain 
 the monomial $\tb$. Therefore, $\pp(b)=\sum_k\x_k$ for some $x_k:j\to i$.
 Let $h(x_1)\le h(x_k)$ for all $k$. Set $c=x_1,\,x'_k=x_k-x_1$
 for $k\ne1$; $\tc=\pp(b),\,\x'_k=\x_k$ for $k\ne1$, $h(x_k')=h(x_k)$. One easily
 sees that after this change of generators the \BT-condition 
 \eqref{eq_dist} as well as the triangularity condition hold,
 but now $\pp(b)=\tc$ and $\pp(c)=-\tb+\theta$,
 as stated in (1). Note also that $\pp(\tc)=\pp^2(b)=0$.
 
 To prove (2) we have to show that it is impossible that 
\begin{equation}\label{imp}
	\pp(y)=q\tb p+\phi
\end{equation}
 for some arrow $y\ne c$ and some paths $p,q$, where $\phi$ 
 does not contain the monomial $q\tb p$. We prove this claim
 using induction on the length $l(q)$ of the path $q$. If $l(q)=0$,
$\pp(y)=\tb p+\phi$. We first suppose that $y\ne\tb$. Then
\begin{align*}
	\pp(a_i)& =-\tb b+c\tc +(-1)^{|y|}y\y +
	\Sum_{\substack{x\in\hb(\cdot,i)\\x\notin\{\tb,c,y\}}}(-1)^{|x|}x\x,\\
	\pp^2(a_i)& = -\pp(\tb)b+\theta c+(-1)^{|y|}(\underline{\tb p\y}+\phi\y)+y\pp(\y)\\
	&+\Sum_{x\notin\{\tb,c,y\}}\big((-1)^{|x|}\pp(x)\x+x\pp(\x)\big)=0.
\end{align*}
 But the underlined term cannot vanish, since no other term both 
 starts with $\tb$ and ends with $\y$. If $y=\tb$, we must omit in these
 equalities all terms with $y$ and $\y$ and replace $\pp(\tb)$ by
 $\tb p+\phi$. Then the term $-\tb pb$ cannot vanish. (This case can
 happen if $\kA$ is only \sot, but not a Roiter box).
 
 Suppose now that \eqref{imp} is impossible if $l(q)=l-1$, but holds 
 for an arrow $y\ne\tb$ and a path $q$ of length $l$.
 (The case $y=\tb$ is handled in the same way, as above.) Then
 \begin{align*}
	\pp^2(a_i)&= -\pp(\tb)b+\theta c+(-1)^{|y|}(\underline{q\tb p\y}+\phi\y)+y\pp(\y)\\
	&+\Sum_{x\notin\{\tb,c,y\}}\big((-1)^{|x|}\pp(x)\x+x\pp(\x)\big)=0.
 \end{align*}
 Therefore, the underlined term must vanish. It is only possible if
 there is an arrow $x$ such that $q=xq'$ and $\pp(\x)$ contains the term
 $q'\tb p\y$. Since $l(q')=l-1$, it is impossible, which accomplishes
 the proof. 

 Just in the same way one proves that the term $\theta$ in $\pp(c)$ cannot
 contain monomials $q\tb p$ other than $\tb$.
 \end{proof}

 Analogous observations can be applied to the case when $\pp(b)=0$.

\begin{lemma}
\label{lemma1}
 If $\pp(b)=0$ for some solid arrow $b$, the arrow $\tb$ does not 
 occur in the differential of any arrow $x$ (solid or dotted).
 \end{lemma}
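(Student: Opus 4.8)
The plan is to repeat, in a lighter form, the argument used for part~(2) of Lemma~\ref{lemma0}; it is lighter because $\pp(b)=0$ means that $b$ has no partner, so there is no solid arrow $c$ with $\pp(c)=-\tb+\theta$ and no tail $\theta$ to track, and in particular $\tb$ never occurs \emph{standalone} in any differential. It is precisely this absence of a standalone $\tb$ that makes all the would‑be cancellations below impossible. Write $b:i\to j$ (possibly $i=j$), so that $\tb:j\dar i$, and observe that by \eqref{eq_dist} the only occurrences of $\tb$ forced a priori are the monomial $-\tb b$ in $\pp(a_i)$ and the monomial $b\tb$ in $\pp(a_j)$; what must be shown is that there are no others. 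Differentiating $\pp(a_i)=-\tb b+\Sum_{x\in\hb(\cdot,i),\ x\ne\tb}(-1)^{|x|}x\x$, and using $\pp^2=0$ together with $\pp(b)=0$, gives the master relation
\[
-\pp(\tb)\,b+\Sum_{\substack{x\in\hb(\cdot,i)\\ x\ne\tb}}\big((-1)^{|x|}\pp(x)\,\x+x\,\pp(\x)\big)=0 ,
\]
together with the analogous relation coming from $\pp^2(a_j)=0$.

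Suppose, for contradiction, that $\tb$ occurs in $\pp(y)$ for some arrow $y$, in a monomial other than the two prescribed ones. Writing such an occurrence as $\pp(y)=q\tb p+\phi$ with $\phi$ not containing the monomial $q\tb p$, pick one with $l(q)$ minimal and induct on $l(q)$. In the base case $l(q)=0$ we have $\pp(y)=\tb p+\phi$; assume first $y\ne\tb$. Since $\tb p$ begins with $\tb$, its target is that of $\tb$, namely $i$, so $y$ ends at $i$ and (when $y\in\hb$) occurs as a companion $y\y$ in $\pp(a_i)$. Substituting $\pp(y)=\tb p+\phi$ into the master relation makes the monomial $\tb p\,\y$ appear with coefficient $\pm1$, and a check of degrees and of first and last letters — using that $x\mapsto\x$ is an involution on $\hb$ and that no differential contains a standalone $\tb$ — shows that none of $-\pp(\tb)b$, of $\phi\,\y$, of $y\,\pp(\y)$, or of the remaining summands $(-1)^{|x|}\pp(x)\x+x\pp(\x)$ (for $x\ne\tb,y$) can reproduce $\tb p\,\y$; hence its coefficient cannot vanish, a contradiction. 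If $y=\tb$ one deletes the $y\y$‑term, substitutes $\pp(\tb)=\tb p+\phi$, and notes that $-\tb p\,b$ cannot be cancelled (this subcase occurs only when $\kA$ is not a Roiter box), again a contradiction.

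For the inductive step $l(q)=l>0$, write $q=xq'$; isolating the monomial $q\tb p\,\y$ in the relation $\pp^2(a_i)=0$ (or $\pp^2(a_k)=0$ for the appropriate distinguished loop) forces $\pp(\x)$ to contain the monomial $q'\tb p\,\y$, i.e. an occurrence of $\tb$ whose prefix $q'$ has length $l-1$, contradicting the minimality of $l(q)$. This closes the induction; applying the same reasoning to $\pp(a_i)$ and $\pp(a_j)$ themselves shows, finally, that $-\tb b$ and $b\tb$ are the only monomials of those two differentials containing $\tb$. The step I expect to be the genuine obstacle is the base‑case bookkeeping: one must verify that not one of the finitely many competing monomials in the master relation can reproduce $\tb p\,\y$, and make precise the reduction to the case in which $y$ is actually a companion in some $\pp(a_k)$ (if $y$ fails to lie in $\hb$, one must instead feed the occurrence into a suitable $\pp^2(a_k)=0$ via the $\BT$‑relations). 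Everything else duplicates, mutatis mutandis, the proof of Lemma~\ref{lemma0}.
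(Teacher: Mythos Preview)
Your proposal is correct and follows exactly the approach the paper intends: the paper's own proof consists of the single sentence ``It practically coincides with the proof of Lemma~\ref{lemma0}, so we omit the details,'' and you have carried out precisely that adaptation, using $\pp^2(a_i)=0$ and inducting on the length of the prefix $q$ in a hypothetical occurrence $q\tb p$. The simplification you note---that with $\pp(b)=0$ there is no partner $c$ and hence no stray $\theta\tc$-term to track---is exactly why the argument is lighter here than in Lemma~\ref{lemma0}.
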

\begin{proof}
 It practically coincides with the proof of Lemma~\ref{lemma0}, so we 
 omit the details.
\end{proof}

 These lemmas imply several nice properties. 

\begin{coro}
\label{coro1}
 Let $b:i\to j$ be an $h$-minimal arrow with $\pp(b)\ne0$, $c$ be its partner as
 defined in Lemma~\ref{lemma0}, $B=A/\langle b,c\rangle$, $f:A\to B$ be the
 natural surjection and $\kB=\kA^f$. Then $\kB$ is also of \BT-type,
 the functor $\f^*:\kB\kmod\to\kA\kmod$ is an equivalence and, if
 $M_1\ne0,\,M_2\ne0$ and $M\simeq\f^*(N)$, then 
 $\|N\|<\|M\|$. 
 \end{coro}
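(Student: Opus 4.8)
The plan is to realize $\kB=\kA^f$ as a standard minimal-edge reduction of $\kA$, so that the machinery of Theorem~\ref{K-R} and Corollary~\ref{amalgama} applies directly, and then to check by hand that the resulting box is again of \BT-type. First I would observe that by Lemma~\ref{lemma0} we may choose generators so that $\pp(b)=\tc$ and $\pp(c)=-\tb+\theta$, with $\tb$ appearing in no other differential. The idea is that $b$ is an $h$-minimal solid arrow, but with $\pp(b)\neq 0$ it is not literally a minimal edge in the sense of Subsection~\ref{subsec 2.2}; however, the pair $(b,c)$ together behaves like a ``regularizable'' configuration. Concretely: since $\pp(c)=-\tb+\theta$ and $\tb$ occurs nowhere else, the dotted arrow $\tb$ is \emph{not} free to be killed by regularization of $c$ directly (because $\theta\neq$ a single dotted arrow in general), but one can first perform the minimal-edge reduction at $b$ — after discarding $\pp(b)$, which only records a relation — and then regularize. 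I would make this precise by passing to $B=A/\langle b,c\rangle$ and identifying $\kA^f$ with the box obtained from $\kA$ by the reduction at the minimal edge $b$ followed by regularization of the superfluous arrow $c$ (whose differential, after the first step, becomes $-\tb$ up to terms that can be absorbed). Since both steps are covered by Theorem~\ref{K-R} and each is an equivalence here (as in Example~\ref{exam1}, because of how the vertices interact), the composite $\f^*:\kB\kmod\to\kA\kmod$ is an equivalence.

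Next I would verify that $\kB$ is of \BT-type. After the reduction, the new vertex $0$ is introduced with dotted arrows $\xi:0\dar 1$ and $\eta:2\dar 0$ (in the notation of Subsection~\ref{subsec 2.2}), and each distinguished loop $a_i$ on an end of $b$ splits into a $2\times 2$ matrix of arrows, one of which (the $(0,0)$-entry) becomes the distinguished loop $a_0$ at the new vertex. One checks using the rules for $\td$ in Subsection~\ref{subsec 2.2} that the transformed differentials $\td(a_i)$ again have exactly the shape \eqref{eq_dist}: the splitting of a solid arrow $x$ into $x_i,x_0$ (or $x_{kl}$) is matched by the splitting of its dotted partner $\x$, and the signs work out because $|x_0|=|x|$ and the new dotted arrows $\xi,\eta$ contribute with the appropriate parity. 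The superfluous-arrow step (regularization of $c$) simply deletes $c$ and $\tb$ and the monomial $\tb$ from $\pp(a_i)$, which again preserves the form \eqref{eq_dist} since after reduction $\tb$ no longer appeared in any other place and $\pp(b)$ had been replaced by a relation that disappears. The distinguished-loop set for $\kB$ is thus $\{a_i : i\neq$ endpoints of $b\}\cup\{a_0\}$ together with the surviving splits of $a_i$'s, and the injection $x\mapsto\x$ on $Q_0\sm\aa$ extends the old one.

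Finally, the norm inequality is exactly the content of Theorem~\ref{K-R}: if $M_1\neq 0$ and $M_2\neq 0$ and $M\simeq\f^*(N)$, then since $b$ is a minimal edge between the (distinct) vertices $1$ and $2$, the reduction step strictly decreases the norm, $\|N\|<\|M\|$; the subsequent regularization of $c$ never increases the norm (it deletes a solid arrow).

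\textbf{Main obstacle.} I expect the technical heart to be the bookkeeping in the second paragraph: checking that after splitting every solid arrow $x$ incident to $\{1,2\}$ into its matrix components, and correspondingly splitting $\x$, and introducing $\xi,\eta$, the differentials of the distinguished loops are \emph{still} of the precise normalized form \eqref{eq_dist} — with the correct signs $(-1)^{|x|}$ and with the new loop $a_0$ genuinely satisfying the same identity. This is a finite but sign-sensitive computation using the formulas for $\td$, the normal section $\F(\om_1),\F(\om_2)$ given in Subsection~\ref{subsec 2.2}, and the Leibniz rule; everything else (the equivalence of categories, the norm estimate) is immediate from the cited results.
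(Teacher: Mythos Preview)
Your approach has a genuine gap: you cannot perform minimal-edge reduction at $b$, because that procedure (Subsection~\ref{subsec 2.2}) requires $\pp(b)=0$, and here $\pp(b)=\tc\ne0$. The phrase ``discarding $\pp(b)$, which only records a relation'' has no meaning in the box formalism; $\pp(b)=\tc$ is part of the structure and cannot be thrown away. Moreover, even if one could reduce the edge, the resulting box would not be $\kA^f$ with $f:A\to A/\langle b,c\rangle$: minimal-edge reduction sends $b$ to the matrix $\left(\begin{smallmatrix}0&0\\0&1\end{smallmatrix}\right)$ and introduces a new vertex $0$, whereas $f$ kills $b$ and $c$ outright and adds no vertices. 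So your identification of $\kB$ with a minimal-edge reduction followed by regularization is simply wrong, and all the subsequent bookkeeping with $\xi,\eta$ and a new $a_0$ is beside the point.

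The observation you are missing is much simpler: $\pp(b)=\tc$ is \emph{already} a single dotted arrow, so $b$ is superfluous as it stands. For $c$, one replaces the free generator $\tb$ of $\bV$ by $v:=-\pp(c)=\tb-\theta$ (legitimate since, by Lemma~\ref{lemma0}(2), $\tb$ occurs in no differential except through $b\tb$ and $\tb b$ in distinguished loops and in $\pp(c)$ itself); then $\pp(c)=-v$ and $c$ is superfluous too. Two applications of regularization (Subsection~\ref{regul}) now give exactly $\kB=\kA^f$, and Theorem~\ref{K-R} yields the equivalence and the norm drop. The \BT-shape survives trivially: the only occurrences of the deleted arrows $b,c,\tc,v$ in the differentials of distinguished loops are the terms $b\tb,\,\tb b,\,c\tc,\,\tc c$, all of which vanish in $\kB$, leaving \eqref{eq_dist} intact for the remaining generators. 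No new vertex, no matrix splitting, no sign chase.
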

 \begin{proof}
 Set $v=-\pp(c)$. Obviously, we can replace $\tb$ by $v$ in the set of
 free generators of $\bV$. Then both $b$ and $c$ become superfluous,
 so we can use the regularization procedure of Subsection~\ref{regul}
 for both of them obtaining just the box $\kB$. The images of $b,c,\tc$
 and $v=\phi-\tb$ become zero in $\kB$. Since $\tb$ only occurs
 in differentials of distinguished loops, always in terms $b\tb$ and $\tb b$,
 which disappear in $\kB$, the box $\kB$ is also \sot\ (with the same
 triangulation) and of \BT-type. The statement now follows 
 from Theorem~\ref{K-R}.
 \end{proof}
 
 Let now $\pp(b)=0$. First we show that the case when $b$ is a loop
 actually cannot occur. Recall that we suppose that $\kA$ is 
 solid-connected and does not coincide with $\ko[t]$, hence, $b$ 
 cannot be distinguished.

\begin{coro}
\label{minedge}
 If $b:1\to1$ is a solid loop with $\pp(b)=0$ and $M\in\Br(\kA)$,
  then $M_1=0$.
  \end{coro}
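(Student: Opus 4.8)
The plan is to deduce this at once from Lemma~\ref{lemma1} and the self-endomorphism construction already used to prove Lemma~\ref{lemmasplit}. First I would recall, as noted just before the statement, that because $\kA$ is solid-connected and differs from $\ko[t]$ the loop $b$ cannot be a distinguished loop; hence $b\in\bb$ and the dotted loop $\tb:1\dar1$ is defined. Since $\pp(b)=0$, Lemma~\ref{lemma1} applies and shows that $\tb$ occurs in the differential of no arrow at all, solid or dotted.

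Next I would run the argument of Lemma~\ref{lemmasplit} with $u=\tb$, in the degenerate case where the source and target of the dotted arrow are both equal to $1$. Concretely, suppose $M_1\ne0$ and define $S:M\to M$ by $S_k=0$ for every vertex $k$, $S(v)=0$ for every dotted arrow $v\ne\tb$, and $S(\tb)$ any nonzero element of $\hom_\ko(M_1,M_1)$. For every solid arrow $a:p\to q$ the defining relation $S_qM(a)-M(a)S_p=S(\pp(a))$ then reduces to $0=0$: the left-hand side vanishes because all $S_k=0$, and the right-hand side vanishes because $\tb$, the only dotted arrow on which $S$ is nonzero, does not appear in $\pp(a)$. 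Thus $S$ is a genuine endomorphism of $M$, and it is not a scalar multiple of $\Id_M$, since $S_1=0$ forces that scalar to be $0$ while $S(\tb)\ne0$. This contradicts $M\in\Br(\kA)$, so $M_1=0$.

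I do not expect any real obstacle: all of the substance sits in Lemma~\ref{lemma1}, and what remains is the familiar trick of building a non-scalar endomorphism supported on a single dotted arrow that never enters a differential, now used in the case of a dotted loop. The one point worth stating explicitly is that $b\in\bb$, so that $\tb$ exists and Lemma~\ref{lemma1} is applicable --- and this was already observed in the paragraph preceding the corollary. (In what follows this corollary is precisely what allows one, when classifying bricks, to replace $\kA$ by $\kA^1$ whenever an $h$-minimal solid arrow with vanishing differential happens to be a loop, a situation in which the minimal edge reduction of Subsection~\ref{subsec 2.2} is not available.)
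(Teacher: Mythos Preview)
Your approach is the same as the paper's, but there is a genuine gap in the verification. You invoke Lemma~\ref{lemma1} to conclude that $\tb$ occurs in the differential of \emph{no} solid arrow, and then argue that $S(\pp(a))=0$ for every solid $a$. But this is false for $a=a_1$: by the very defining relation \eqref{eq_dist} of a \BT-box, since $b:1\to1$ is a loop, both $b$ and $\tb$ lie in $\hb(\cdot,1)$, and hence
\[
\pp(a_1)=b\tb-\tb b+\Sum_{\substack{x\in\hb(\cdot,1)\\x\notin\{b,\tb\}}}(-1)^{|x|}x\x.
\]
(The statement of Lemma~\ref{lemma1} is slightly imprecise on this point; its proof, which mirrors that of Lemma~\ref{lemma0}, only rules out occurrences of $\tb$ in $\pp(x)$ for $x\in\bb\cup Q_1$, not for the distinguished loops $a_i$. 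The paper itself acknowledges this in its proof of the present corollary, writing ``$\tb$ does not occur in any differential of a solid arrow \emph{other than $a_1$}.'') Consequently, for your $S$ the relation at $a_1$ reads
\[
0=S_1M(a_1)-M(a_1)S_1=S(\pp(a_1))=M(b)\,S(\tb)-S(\tb)\,M(b),
\]
which does \emph{not} hold for an arbitrary nonzero $S(\tb)\in\hom_\ko(M_1,M_1)$.

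The fix, which is exactly what the paper does, is to choose $S(\tb)=\Id_{M_1}$. Then the two terms above cancel, while for every other solid arrow $a$ the right-hand side vanishes because $\tb$ does not occur in $\pp(a)$. With this specific choice your argument goes through unchanged and coincides with the paper's proof.
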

\begin{proof}
 Suppose that $M_1\ne0$. Then 
 \[
  \pp(a_1)=b\tb  -\tb  b+
  \Sum_{x\notin\{b,\tb\}}(-1)^{|x|}x\x.  
  \]
  Set $S_i=0$ for all $i$, $S(v)=0$ for all $v\in Q_1\sm\{\tb\}$ and 
  $S(\tb)=\Id_{M_1}$. Since $\tb$ does not occur in any differential of a
  solid arrow other than $a_1$, $S$ is a non-scalar endomorphism of $M$,
  so $M$ is not am brick.
\end{proof} 

 The minimal edge reduction described in Subsection~\ref{regul} usually
 does not give a \BT-box if the original one was so. Nevertheless, the 
 following result holds.

\begin{lemma}[Self-Reproduction] 
\label{self}
 Let $\kA$ be a \BT-box, $b:2\to 1$ be a minimal edge in $\kA$. Then
 there is a morphism of boxes $\kA^b\to\kB$, which is actually a
 composition of regularizations, such that $\kB$ is also a \BT-box and
 $\Br(\kB)=\Br(\kB^1)\cup\Br(\kB^2)$. Moreover, if $M_1\ne0,\,M_2\ne0$
 and $M\simeq\f^*(N)$, where $\f$ is the composition $\kA\to\kA^b\to\kB$, 
 then $\|N\|<\|M\|$.
 \end{lemma}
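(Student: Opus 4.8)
\textbf{Proof plan for Lemma~\ref{self} (Self-Reproduction).}
The plan is to perform the minimal edge reduction of $\kA$ at $b:2\to1$ explicitly using the formulae of Subsection~\ref{subsec 2.2}, keeping careful track of what happens to the distinguished loops $a_1,a_2$ and the new dotted arrows $\xi:0\dar1,\ \eta:2\dar0$, and then to recognize that the only obstruction to $\kA^b$ being a \BT-box is a bunch of superfluous arrows which can be killed by successive regularizations, yielding the desired \BT-box $\kB$. First I would note that since $\kA$ is solid-connected and is not $\ko[t]$, the edge $b$ is not a distinguished loop, so $b\in\bb$ and $\tb\in\tilde\bb$. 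By Lemma~\ref{lemma1} (applied to $b$, which has $\pp(b)=0$), the dotted arrow $\tb$ does not occur in $\pp(x)$ for any arrow $x$; in particular $\tb$ occurs only in $\pp(a_1)$ and $\pp(a_2)$, always in the monomials $b\tb$ (in $\pp(a_2)$, up to sign, since $\tb:1\dar2$) and $\tb b$ (in $\pp(a_1)$).

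Next I would carry out the reduction. Splitting $M_1=M_1\+M_0$, $M_2=M_2\+M_0$ and $M(b)=\mtr{0&0\\0&1}$ as in Subsection~\ref{subsec 2.2}, the arrow $b$ itself becomes $f'(b)$, and the distinguished loops split into $2\times2$ matrices of arrows. The key computation is $\td$ of $\F(\om_1)=\mtr{\om_1&0\\ \eta&\om_0}$ and $\F(\om_2)=\mtr{\om_2&\xi\\0&\om_0}$ together with $\F(a_1),\F(a_2)$: from the defining rules $\F(\om_j)\F(a)-\F(a)\F(\om_i)=\td(\F(a))$ and the comultiplication rule for $\F(v)$, one reads off that the new loops $a_1$ on the vertex $1$, $a_2$ on the vertex $2$, together with a new loop $a_0$ on the vertex $0$, again satisfy \BT-type relations with respect to the enlarged arrow sets, \emph{except} that the monomials coming from $b\tb$ and $\tb b$ now produce, after the substitution $\F(b)=\mtr{0&0\\0&1}$, certain solid arrows $b_1,b_2$ between $0,1,2$ paired off against new dotted arrows, which turn out to be superfluous in the sense of Subsection~\ref{regul} — their differentials are, up to a change of free generators analogous to the one in Lemma~\ref{lemma0}, exactly a single dotted arrow. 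Regularizing all of these away (a finite composition of regularizations $\kA^b\to\kB$), one deletes those superfluous solid arrows and the paired dotted arrows and drops the now-vanishing terms; what remains is a \sot\ box $\kB$ whose solid loops are precisely $a_0,a_1,a_2$ (and the original distinguished loops at vertices $\notin\{1,2\}$), and whose differentials of these loops again have the shape \eqref{eq_dist} for a suitable injective $\tilde{}:\,(\kB)_0\sm\aa_\kB\to(\kB)_1$. Thus $\kB$ is a \BT-box. I would model this bookkeeping on Example~\ref{exam1}, where $\kA$ of \eqref{figsmall} reduces to \eqref{figbig}, which is visibly a \BT-box with distinguished loops $a_0,a_1,a_2$; the general case is the same pattern with extra spectator arrows.

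Finally, the decomposition $\Br(\kB)=\Br(\kB^1)\cup\Br(\kB^2)$ comes from Lemma~\ref{lemmasplit}: among the new dotted arrows produced, $\xi:0\dar1$ and $\eta:2\dar0$ survive in $\kB$, but in fact what I must exhibit is a dotted arrow between $1$ and $2$ that occurs in no solid differential. Tracking the image of the original dotted arrow $\tb:1\dar2$ through the reduction and the regularizations: since $\tb$ occurred in $\pp(a_1)$ and $\pp(a_2)$ only in the monomials $\tb b$ and $b\tb$, and $b$ becomes $\mtr{0&0\\0&1}$, after regularization the $(1,1)$-component of this split dotted arrow — call it $v:1\dar2$ (or $2\dar1$) — appears in no solid differential at all, exactly as the arrow $v$ in \eqref{figbig} of Example~\ref{exam1}. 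Hence Lemma~\ref{lemmasplit} gives $\Br(\kB)=\Br(\kB^1)\cup\Br(\kB^2)$. The norm inequality $\|N\|<\|M\|$ when $M_1\ne0$ and $M_2\ne0$ is inherited directly from Theorem~\ref{K-R} for the minimal edge reduction step $\kA\to\kA^b$, since regularizations do not increase the norm. The main obstacle I anticipate is the combinatorial bookkeeping in the second step — verifying in general (not just in the example) that after the minimal edge reduction every ``bad'' new solid arrow is genuinely superfluous and that the surviving data reassemble into the \BT-pattern \eqref{eq_dist} with a correctly defined pairing $\tilde{}$ and a compatible triangulation; everything else is a direct appeal to Theorem~\ref{K-R}, Lemma~\ref{lemma1}, and Lemma~\ref{lemmasplit}.
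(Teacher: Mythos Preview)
Your plan is the paper's own approach: perform the minimal edge reduction, regularize the superfluous arrows that appear, verify the \BT-structure, and then apply Lemma~\ref{lemmasplit} to the surviving component $v_{21}$ of $\tb$, with the norm drop coming from Theorem~\ref{K-R}. The one place where your description goes astray is the identification of what becomes superfluous. The monomials $b\tb$ and $\tb b$ do not ``produce certain solid arrows $b_1,b_2$''; rather, after the reduction the distinguished loops $a_1,a_2$ themselves split into $2\times 2$ blocks $(a_{kl})_{k,l\in\{1,0\}}$ and $(c_{kl})_{k,l\in\{2,0\}}$, giving eight solid arrows, while $\tb$ splits into four dotted components $v_{21},v_{20},v_{01},v_{00}$. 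The terms $b\tb$ and $-\tb b$, after substituting $\F(b)=\mtr{0&0\\0&1}$, contribute $v_{01},v_{00}$ to $\pp(a_{01}),\pp(a_{00})$ and $-v_{20},-v_{00}$ to $\pp(c_{20}),\pp(c_{00})$; hence \emph{three} arrows $a_{01},a_{00},c_{20}$ are superfluous (not two), and after regularizing them the remaining five components are the three distinguished loops $a_{11},c_{22},c_{00}$ at $1,2,0$ and the two edges $a_{10},c_{02}$, with $\tilde a_{10}=\eta,\ \tilde c_{02}=\xi$. In particular, before regularization there are \emph{two} loops at the new vertex $0$, and one of them ($a_{00}$) is among the arrows to be killed --- this is the piece of bookkeeping your sketch does not anticipate. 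With that correction, your argument goes through exactly as in the paper.
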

\begin{proof}
  We denote $a=a_1,\,c=a_2,\,v=\tb$. Then the rules for the minimal
  edge reduction (see page~\pageref{formule}) result in the 
  biquiver 
  \[
\xymatrix @ -1pc
    {
1\ar@(ul,dl)_{a_{11}}
\ar@<+0.8pc>@{..>}@/^/[rrr]^{\eta}
\ar@<-0.8pc>@{..>}@/_/[rrr]_{v_{01}}
\ar@<1pc>@{..>}@/^+25pt/[rrrrrr]^{v_{21}}
\ar@<-0.3pc>[rrr]_{a_{01}}
&&& 
~~0~~\ar@(ru,ul)_{a_{00}}
\ar@(ld,d)_{c_{00}} \ar@{..>}@(rd,d)^{v_{00}}
\ar@<-0.3pc>[lll]_{a_{10}}
\ar@<+0.8pc>@{..>}@/^/[rrr]^{\xi}
\ar@<-0.8pc>@{..>}@/_/[rrr]_{v_{20}}
\ar@<-0.3pc>[rrr]_{c_{20}}
&&&
2 \ar@(ru,rd)^{c_{22}}
\ar@<-0.3pc>[lll]_{c_{02}}
\\
}
\]
with the differential
  \begin{equation}\label{dif1}
   \begin{split}
    \pp(a_{11})&=a_{10}\eta+\alpha_{11},\\
    \pp(a_{10})&=\alpha_{10},\\
    \pp(a_{01})&=v_{01}+a_{00}\eta-\eta a_{11}+\alpha_{01},\\
    \pp(a_{00})&=v_{00}-\eta a_{10}+\alpha_{00},\\
    \pp(c_{22})&=-\xi c_{02}+\beta_{22},\\
    \pp(c_{20})&=-v_{20}+c_{22}\xi-\xi c_{00}+\beta_{20},\\
    \pp(c_{02})&=\beta_{02},\\
    \pp(c_{00})&=c_{02}\xi-v_{00}+\beta_{00},
   \end{split}
  \end{equation}
where $\alpha_{kl}$ and $\beta_{kl}$ 
are collections of the other terms, which 
do not contain the
arrows $a_{kl},c_{kl},v_{kl}$. Moreover, the terms $\alpha_{kk}$ and
$\beta_{kk}$ are just of the form $\sum_x(-1)^{|x|} x\x$, as in
\eqref{eq_dist}, with $x$ and $\x$ different from $a_{kl}$ and
$c_{kl}$. Set
$\pp(a_{01})=u_{01},$ $\pp(c_{20})=u_{20}$, $\pp(a_{00})=u_{00}.$ One easily
sees that we can replace the generators $v_{kl}$ of the kernel of the
box $\kA^b$ by $u_{kl}$ so that the resulting set of generators remains \sot. 
Then the arrows $a_{01},\,a_{00},\,c_{20}$ become
superfluous. After regularization they disappear, as well as the
dotted arrows $u_{01},\,u_{00},\,u_{20}$, and the formulae \eqref{dif1}
change to: 
 \begin{equation}\label{dif2}
   \begin{split}
    \pp(a_{11})&=a_{10}\eta+\alpha_{11},\\
    \pp(a_{10})&=\alpha_{10},\\
    \pp(c_{22})&=-\xi c_{02}+\beta_{22},\\
    \pp(c_{02})&=\beta_{02},\\
    \pp(c_{00})&=c_{02}\xi-\eta a_{10}+\alpha_{00}+\beta_{00}.
   \end{split}
  \end{equation}
 Therefore, if we set
 $a_1=a_{11},$ $a_2=c_{22},$ $a_0=c_{00},$ $\tilde a_{10}=\eta$ and
 $\tilde c_{02}=\xi$, we see that the resulting box $\kB$ is indeed a
 \BT-box.

 Moreover, the dotted arrow $v_{21}$ does not occur in the differential
 of any solid arrow (since,  by Lemma~\ref{lemma1}, the arrow $v$  
 was not involved in the differentials of arrows from $\bb$). 
 Therefore, by Lemma~\ref{lemmasplit}, $\Br(\kB)=\Br(\kB^1)\cup\Br(\kB^2)$.
 The other statements follow from Theorem~\ref{K-R}.
\end{proof}

 \begin{remk}
 The boxes $\kB^1$ and $\kB^2$ are actually obtained from $\kA$ by the 
 \emph{small reduction} of the minimal edge $b$ as defined in \cite{dr92}.
 \end{remk}

Having  these Lemmas, the proof of the theorem is quite obvious.

\begin{proof}[Proof of Theorem \ref{theo_main}]
 (1)  
Let $\kA$ be a \BT-box with the differential bi\-quiver $(Q,\pp)$
and $\dd$ be a vector dimension such that the set
$\Br(\dd,\kA)$ of bricks 
of vector dimension  $\dd$ is non-empty. Without loss of generality,
we may suppose that $\kA$ is solid-connected and $d_i\ne0$ for all $i$.
Then $Q$ contains no non-distinguished loops with zero differential 
by Corollary~\ref{minedge}. Thus, if $Q$ only has one vertex, $\kA=\ko[t]$ 
and the statement is trivial. Let $b:2\to1$ be an $h$-minimal arrow.
Without loss of generality assume that $d_2\le d_1.$ Consider the box
$\kB$ and the morphism $\F:\kA\to\kB$ constructed in Corollary~\ref{coro1}
 or Lemma~\ref{self}. The box $\kB$ is also of \BT-type, 
 $\f^*:\kB\kmod\to\kA\kmod$ is an equivalence of categories, and 
 $\|N\|<\|M\|$ for any $M\in\kA\kmod$ such that $\dd(M)=\dd$
 and $\f^*(N)\simeq M$. Especially, this inequality holds if
$M\in\Br(\dd,\kA)$. Moreover, then $N$ is also a brick. If $b$ was
superfluous, $\dd(N)=\dd(M)$. If $b$ was a minimal edge, then 
 either $N_1=0$ or $N_2=0$, while $\dim M_1=\dim N_1+\dim N_0$ and 
$\dim M_2=\dim N_2+\dim N_0$. Since $d_2\le d_1$,
it implies that $N_2=0$, $\dim N_0=d_2$ and $\dim
N_1=d_1-d_2$. Hence, the vector dimension of $N$ is uniquely
determined. So we can proceed by induction on $\|M\|$, since,
if $\|M\|=1$ and $\kA$ is solid-connected, it only contains 
one vertex.

\medskip
 (2)
 Let $b:i\to j$ be an $h$-minimal arrow of $\kA$. Then
 $\pp(b)=0$, so the differential biquiver $(Q,\pp)$ of
 $\kA$ has a fragment $(Q',\pp')$ that coincides either with the biquiver
 \eqref{figsmall} from the Introduction or with the biquiver
 \begin{equation}\label{newfig}
  \xymatrix{i \ar@(dl,ul)^{a_i} \ar@(dr,ur)_b \ar@(dl,dr)@{.>}_\tb  } \qquad
    \pp(a_i)=b\tb  -\tb  b,\ \pp(b)=0.
 \end{equation}
Let $\overline\kA$ be the box with the biquiver $(Q',\pp')$, $\overline M$ be an
$\overline\kA$-module and $M$ be the $\kA$-module that coincide with $\overline M$
on $Q'$ and is zero outside it. One easily checks that for two such $\kA$-modules
$M$ and $M'$ we have 
$M\simeq M'$ \iff $\overline M\simeq\overline M'$. Hence, if
$\overline\kA$ is wild, so is also $\kA$. The box \eqref{newfig} is
well-known to be wild \cite{dr79,CB88}. So we only have to prove that the
box \eqref{figsmall} (or, equivalently, \eqref{figbig}) is also
wild. To do it, we use the reduction procedure described above.

In what follows, when drawing the biquiver of a \BT-box, we omit the
distinguished loops and their differentials and do not precise the
names of the dotted arrows $\tb $, since these data can be uniquely
restored. We also do not mention in the list of differentials
the arrows $b$ with $\pp(b)=0$.  Moreover, we usually omit the dotted 
arrows that do  not occur in the differentials. Obviously, omitting 
such dotted arrows does not affect the representation type. In 
particular the differential biquiver \eqref{figsmall} will be presented as
 \[
  \xymatrix{1 \ar@/_/@{.>}[rr]&&2 \ar@/_/[ll]_b}
 \]
and the differential biquiver \eqref{figbig} as
 \[
  \xymatrix{1 \ar@/_/@{.>}[rr]  && 0 \ar@/_/[ll]_{b_1}\ar@/_/@{.>}[rr]
 && 2 \ar@/_/[ll]_{b_2} }
 \]
 (we omit $v$). After the reduction of the minimal edge $b_1$ and
 regularization, we get the \BT-box with the biquiver
 \begin{equation*}
 \begin{split}
  \xymatrix{
    1 \ar@/_/@{.>}[rr] && 3 \ar@/_/[ll]_{b_1} \ar@/_/@{.>}[rr]
    \ar@/_/@{.>}[dd] && 0 \ar@/_/[ll]_{b_0}\ar@/_/@{.>}[ddll] \\ \\
   && 2 \ar@/_/[uurr]_{b_2} \ar@/_/[uu]_{b_3}
    } \\
  \pp(b_2)=-\tb_0b_3,\ \pp(b_0)=b_3\tb_2.
 \end{split}
 \end{equation*}
  Now we reduce the minimal edge $b_3$. After regularization, 
 we get the \BT-box with the biquiver
 \begin{equation*}
 \begin{split}
  \xymatrix{
    1 \ar@/_/@{.>}[rr] \ar@/_/@{.>}[ddrr] 
    && 3 \ar@/_/[ll]_{b_1} \ar@/_/@{.>}[rr] \ar@/_/@{.>}[dd] 
    && 0 \ar@/_/[ll]_{b_0}\ar@/_/@{.>}[dd] \\ \\
    && 4 \ar@/_/@{.>}[rr] \ar@/_/@{.>}[uu] \ar@/_/[uu]_{b_3}
    \ar@/_/[uull]_{b_5}
    && 2 \ar@/_/[uu]_{b_2} \ar@/_/[ll]_{b_4}
    } \\
  \pp(b_1)=b_5\tb_3,\ \pp(b_3)=-\tb_1b_5.
 \end{split}
 \end{equation*}
 If we factor out the arrow $b_5$, the remaining non-distinguished
 arrows become minimal and form the quiver
 \[
  \xymatrix{
    1 && 3 \ar[ll]  && 0 \ar[ll]  \\
    && 4 \ar[u] && 2 \ar[ll] \ar[u]  }
 \]
which is neither Dynkin nor Euclidean, hence, wild. Therefore, so is
also the box \eqref{figsmall}. It accomplishes the proof.
\end{proof}

\section{\BT-boxes and coadjoint action}
\label{coadj}
 
 A natural class of \BT-boxes arises from linear groups over algebras.
 Recall \cite{dr92,bdf} that a \emph{linear group over an algebra} $\La$ is,
 by definition, the group $\GL(P,\La)$ of automorphisms of a finitely
 generated projective $\La$-module $P$. If $\La$ is finite dimensional
 over a field $\ko$, $\GL(P,\La)$ is a linear group over $\ko$ (Lie group
 if $\ko$ is the field of complex numbers). Its Lie algebra $\gl(P,\La)$
 is just the commutator algebra of the endomorphism algebra of $P$. 
 In the representation theory of 
 linear and Lie groups the coadjoint action of a group on the dual space 
 of its Lie algebra, especially its orbit space, plays an important role. 
 Note that the dual space of $\gl(P,\La)$ is
	\begin{align*}
	\gl^*(P,\La)&=
	\hom_\ko(\hom_\La(P,P),\ko)\simeq \hom_\ko(P^\vee\*_\La P,\ko)\\
	 &\simeq \hom_\La(P,\hom_\ko(P^\vee,\ko))
	 \simeq\hom_\La(P,\hom_\ko(P^\vee\*_\La\La,\ko))\\
	 &\simeq\hom_\La(P,\hom_\La(P^\vee,\La^*))
	 \simeq\hom_\La(P,\La^*\*_\La P),
	\end{align*}
	where $P^\vee=\hom_\La(P,\La)$ and $\La^*=\hom_\ko(\La,\ko)$.

  From now on suppose that the algebra $\La$ is \emph{basic}, i.e. 
  if $1=\sum_{i=1}^ne_i$, where $e_i$ are pairwise orthogonal primitive
  idempotents, $\La e_i\not\simeq\La e_j$ as $\La$-modules for $i\ne j$.
  Since every finite dimensional algebra is Morita-equivalent to a basic
  one, every linear group over a finite dimensional algebra is isomorphic
  to a linear group over a basic algebra.
	If we fix the algebra $\La$ and consider all linear groups
 $\GL(P,\La)$, the description of orbits in all dual spaces $\gl(P,\La)$
 coincides with the ``\emph{bimodule problem},'' namely, the description of
 isomorphism classes in the \emph{bimodule category}, or the category of
 \emph{elements of the bimodule} $\El(\La^*)$. Recall \cite{dr01,bdf} that  
\begin{itemize}
	\item the \emph{objects} of $\El(\La^*)$ are just the elements of 
	$\hom_\La(P,\La^*\*_\La P)$,
 where $P$ runs through projective $\La$-modules;
  \item if $u\in\hom_\La(P,\La^*\*_\La P),\,v\in\hom_\La(P',\La^*\*_\La P')$,
  \emph{morphisms} $u\to v$ are homomorphisms $\al:P\to P'$ such that 
  $v\al=(1\*\al)u$.
\end{itemize}
  Thus isomorphisms
 $u\to v$, where $u,v\in\hom_\La(P,\La^*\*_\La P)$, are the elements $g\in\GL(P,\La)$
 such that $v=(1\*g)ug^{-1}$, which coincides with the adjoint action of
 $\GL(P,\La)$ on $\gl^*(P,\La)$. 
 
 Recall \cite{dr01} that the category $\El(\La^*)$ can be identified with 
 the category of representations of a Roiter box $\kL_\La=(A,V)$. Namely, 
 let $\kR$ be the radical of $\La$, $\La_{ij}=e_i\La e_j$,
 $\kR_{ij}=e_i\kR e_j$. Note that 
 $\La_{ij}=\kR_{ij}$ if $i\ne j$, while $\La_{ii}=\kR_{ii}\+\ko e_i$
 (since $\La$ is basic and $\ko$ is algebraically closed). 
 Choose a basis $\rB_0(j,i)$ of $\kR_{ij}$ and set
	\[
	\rB(j,i)=\begin{cases}
	\rB_0(j,i) &\text{if } i\ne j,\\
	\rB_0(i,i)\cup\{e_i\} &\text{if } i=j.
	\end{cases}
  \]
  (It is a basis of $\La_{ij}$.) Let $\rD(j,i)$ be the basis of $(\kR_{ij})^*$ 
  dual to $\rB_0(j,i)$, $\rB=\bigcup_{i,j}\rB(j,i),\
  \rD=\bigcup_{i,j}\rD(j,i)$,
  and $\ga(x,y,b)$ are the structure constants of the algebra $\La$,
  i.e. $xy=\sum_b\ga(x,y,b)b$ for $x,y,b\in\rB$. It implies that
 \begin{align*}
  &x^*y=\Sum_b\ga(y,b,x)b^*\ \text{ and }\ xy^*=\Sum_b\ga(b,x,y)b^*,\\
  &\ga(x,e_j,b)=\ga(e_j,x,b)=\de_{xb}\ \text{ for }\ x\in\rB(j,i).
 \end{align*}
  Then the set of solid arrows $j\to i$ in $\kL_\La$ is $\rB(j,i)$, while 
  the set of dotted arrows $j\dar i$ is $\rD(j,i)$.
  The differential is defined by the rules
  \begin{align*} 
  \pp(b)&= \Sum_{x,y} (\ga(b,x,y)xy^*-\ga(y,b,x)x^*y),\\
  \pp(b^*)&= \Sum_{x,y}\ga(x,y,b)x^*\*y^*,
  \end{align*}
  Especially,
  \[
	\pp(e_i)=\Sum_{x\in\rB(\cdot,i)} xx^* - \Sum_{y\in\rB(i,\cdot)} y^*y.
	\]
	Hence, setting $\aa=\set{e_i}$ and $\tb=b^*$ for $b\in\rB_0=\rB\sm\aa$,
	we get the following statement.
 
 	\begin{prop}\label{alg}
 	The box $\kL_\La$ is of \BT-type.
	\end{prop}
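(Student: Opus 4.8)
The statement to prove is Proposition~\ref{alg}: that the box $\kL_\La$ associated to the element category $\El(\La^*)$ is a $\BT$-box. My plan is essentially to verify the definition of $\BT$-type directly against the explicit differential formulas already displayed just above the proposition. First I would take the distinguished loops to be the arrows $e_i:i\to i$ corresponding to the primitive idempotents (these are exactly the arrows of $\rB(i,i)$ that are not in $\rB_0$), so $\aa=\{e_i\mid i\in I\}$, and I would take $\bb=\rB_0=\rB\sm\aa$, with the assignment $b\mapsto\tilde b:=b^*$ for $b\in\rB_0$. Since $\rD(j,i)$ is by construction the dual basis of $\rB_0$-elements, the map $b\mapsto b^*$ is a bijection $\bb\to Q_1$ sending a solid arrow $b:i\to j$ to a dotted arrow $b^*:j\dar i$, so the injectivity and orientation-reversal requirements of the $\BT$-definition hold, and $\widetilde{\widetilde b}=b^{**}=b$ is automatic.

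\smallskip
The second and main verification is the differential identity \eqref{eq_dist} for the distinguished loops. Here I would just quote the displayed formula
\[
  \pp(e_i)=\Sum_{x\in\rB(\cdot,i)} xx^* - \Sum_{y\in\rB(i,\cdot)} y^*y
\]
and compare it term-by-term with
\[
  \pp(a_i)=\Sum_{x\in\hb(\cdot,i)}(-1)^{|x|}x\tilde x,\qquad \hb=\bb\cup\bbb.
\]
The sum over $\hb(\cdot,i)$ splits into the part indexed by solid arrows $x\in\bb$ ending at $i$ — these contribute $(-1)^{1}xx^{*}=-xx^{*}$? — and the part indexed by dotted arrows $x^{*}\in\bbb$ ending at $i$, i.e. $x\in\bb$ starting at $i$, for which $\widetilde{x^*}=x$ and $|x^*|=1$, contributing $(-1)^{1}x^{*}x$. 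Wait: I need to be careful about which summand gets which sign, so at this point I would check the degree convention: a solid arrow has degree $0$ and a dotted arrow degree $1$, so for $x\in\bb$ the monomial $x\tilde x=xx^*$ has $|x|=0$, giving $+xx^*$, while for $x^*\in\bbb$ the monomial $x^*\widetilde{x^*}=x^*x$ has $|x^*|=1$, giving $-x^*x$. That matches the formula for $\pp(e_i)$ exactly, with the first sum ranging over solid arrows into $i$ and the second over solid arrows out of $i$ (after noting that dotted arrows into $i$ correspond to solid arrows out of $i$). So \eqref{eq_dist} holds. I should also note that the sums are over $\rB(\cdot,i)$ and $\rB(i,\cdot)$ which include the idempotents $e_i$ themselves, but the contributions $e_ie_i^{*}$ and $e_i^{*}e_i$ do not actually appear because $e_i\notin\bb$; equivalently one checks that these spurious terms cancel using $\ga(e_j,x,b)=\ga(x,e_j,b)=\de_{xb}$, which is exactly the computation that turns the general $\pp(b)$-formula into the stated $\pp(e_i)$-formula.

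\smallskip
Finally I would confirm the two remaining structural requirements: that $\kL_\La$ is \sot\ (indeed triangular), and that it is a free normal box so the $\BT$-definition even applies. Normality and freeness are part of the cited identification of $\El(\La^*)$ with $\kL_\La$ from \cite{dr01}; for triangularity one can take $h$ to count, say, the radical-length (nilpotency degree) of a basis element $b\in\rB_0$, together with $h(b^*)$ chosen larger than every $h(b)$ — more precisely, from the formulas $\pp(b)=\Sum_{x,y}(\ga(b,x,y)xy^*-\ga(y,b,x)x^*y)$ and $\pp(b^*)=\Sum_{x,y}\ga(x,y,b)x^*\otimes y^*$ one reads off that a nonzero structure constant $\ga(x,y,b)$ forces $x,y$ to have strictly smaller radical-length than $b$, which gives the triangulation. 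I expect the only genuinely delicate point to be getting the signs in \eqref{eq_dist} to match the degree convention $(-1)^{|x|}$, and keeping straight that a dotted arrow $b^*:j\dar i$ is recorded in the biquiver with the \emph{reversed} orientation relative to the solid arrow $b:i\to j$; once those bookkeeping conventions are pinned down the proposition is immediate from the already-displayed differential formulas, so the proof can be left as the single sentence already in the text, or expanded into the paragraph above.
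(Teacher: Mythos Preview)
Your verification is exactly the paper's argument: the paper's entire proof is the single sentence preceding the proposition, namely ``setting $\aa=\{e_i\}$ and $\tb=b^*$ for $b\in\rB_0=\rB\sm\aa$'', and your careful matching of the displayed formula for $\pp(e_i)$ against \eqref{eq_dist}---including the degree bookkeeping $|x|=0$ for solid and $|x|=1$ for dotted, and the observation that dotted arrows into $i$ correspond bijectively to solid arrows out of $i$---is precisely the content of that sentence, spelled out. Your remark that the $e_ie_i^*$ terms are vacuous because $e_i\notin\rB_0$ is also correct and worth keeping.

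One caution on the last paragraph. Your proposed triangulation by radical length only handles $\pp(b^*)$: a nonzero $\ga(x,y,b)$ indeed forces the radical lengths of $x,y$ to be strictly smaller than that of $b$, so the dotted differential is triangular for this $h$. But the \emph{solid} differential $\pp(b)=\sum(\ga(b,x,y)xy^*-\ga(y,b,x)x^*y)$ involves the constants $\ga(b,x,y)$ and $\ga(y,b,x)$, and here the solid factor $x$ (resp.\ $y$) need \emph{not} have smaller radical length than $b$; for instance, for $\La=\ko[t]/(t^3)$ the term $t\cdot(t^2)^*$ occurs in $\pp(t)$, so $t$ appears in its own differential with this choice of $h$. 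The paper sidesteps this by simply invoking \cite{dr01} for the fact that $\kL_\La$ is a Roiter box (hence in particular \sot); you should do the same rather than claim that radical length gives the triangulation directly.
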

	
	By the way, it implies, due to Theorem~\ref{theo_main}\,(2), that the problem
	of description of orbits of the coadjoint action of $\GL(P,\La)$ for all $P$
	is wild, whenever the algebra $\La$ is not semisimple.

 Obviously, an element $\xi\in\gl^*(P,\La)$ is a brick \iff it has the trivial
  stabilizer: $\mathrm{Stab}_{\GL(P,\La)}(\xi)=\ko^\times$ 
  (the multiplicative group of the field $\ko$).

  \begin{coro}\label{lingr}
 If the set $S(P,\La)$ of bricks in $\gl^*(P,\La)$ is non-empty,
 all elements from this set belong to a single rational family. 
  In particular, $S(P,\La)/\GL(P,\La)\simeq\ko$.
  \end{coro}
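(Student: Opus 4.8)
The plan is to transport Corollary~\ref{lingr}, through the identification $\El(\La^*)\simeq\kL_\La\kmod$ recalled above, into a statement about the bricks of the box $\kL_\La$ of one fixed vector dimension, and then to apply Proposition~\ref{alg} together with Theorem~\ref{theo_main}\,(1).

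First I would fix $P=\bigoplus_{i=1}^n(\La e_i)^{d_i}$ and note that, under the equivalence $\El(\La^*)\simeq\kL_\La\kmod$, an element $u\in\hom_\La(P,\La^*\*_\La P)$ goes to a $\kL_\La$-module $M_u$ with $(M_u)_i\simeq\ko^{d_i}$; thus $P$ prescribes exactly the vector dimension $\dd=(d_1,\dots,d_n)$ of $M_u$, and conversely every $\dd$ is realised by some $P$. Next I would spell out the two dictionary entries that turn the corollary into a formal consequence. The first is that two elements $u,v\in\hom_\La(P,\La^*\*_\La P)$ lie in one $\GL(P,\La)$-orbit for the coadjoint action if and only if $M_u\simeq M_v$ in $\kL_\La\kmod$; this is precisely the description of the isomorphisms of $\El(\La^*)$ given above. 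The second is the remark already made before the corollary: $u$ is a brick, i.e. $\mathrm{Stab}_{\GL(P,\La)}(u)=\ko^\times$, exactly when $M_u$ has only scalar endomorphisms, i.e. $M_u\in\Br(\kL_\La)$. Putting these together one gets a natural bijection between $S(P,\La)/\GL(P,\La)$ and $\Br(\dd,\kL_\La)$, the set of isomorphism classes of $\kL_\La$-bricks of vector dimension $\dd$.

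It then remains to identify $\Br(\dd,\kL_\La)$ with $\ko$ when it is non-empty. By Proposition~\ref{alg} the box $\kL_\La$ is of \BT-type, so Theorem~\ref{theo_main}\,(1) applies: if $\Br(\dd,\kL_\La)\ne\0$, then every brick of vector dimension $\dd$ belongs to a single rational family $\kF_\dd\colon A\to\add\ko[t]$. Since the rational algebra here is the whole polynomial ring $\ko[t]$, its bricks run over all of $\ko$, and the defining property of a rational family of bricks makes $\la\mapsto[\kF_\dd^*(\la)]$ an injection $\ko\emb\Br(\dd,\kL_\La)$; it is also surjective, because by Theorem~\ref{theo_main}\,(1) \emph{every} brick of dimension $\dd$ belongs to $\kF_\dd$. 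Hence $\Br(\dd,\kL_\La)\simeq\ko$, and composing with the bijection of the previous paragraph yields $S(P,\La)/\GL(P,\La)\simeq\ko$.

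I do not expect a genuine obstacle: the whole content sits in Proposition~\ref{alg} and Theorem~\ref{theo_main}. The one place that wants care is the bookkeeping in the dictionary $\El(\La^*)\simeq\kL_\La\kmod$---in particular, checking that the endomorphism algebra of $M_u$ in $\kL_\La\kmod$ is canonically the stabiliser algebra of $u$ together with its scalars, so that the word ``brick'' means the same thing on the two sides---but this is already built into the construction of $\kL_\La$ from $\El(\La^*)$ recalled from \cite{dr01}.
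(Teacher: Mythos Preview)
Your proposal is correct and follows exactly the route the paper intends: the corollary is stated without proof because it is an immediate consequence of Proposition~\ref{alg} (the box $\kL_\La$ is of \BT-type) together with Theorem~\ref{theo_main}\,(1), via the identification $\El(\La^*)\simeq\kL_\La\kmod$. You have simply unpacked the implicit dictionary (fixing $P$ amounts to fixing the vector dimension $\dd$; orbits correspond to isomorphism classes; trivial stabiliser corresponds to scalar endomorphism algebra) and then read off the bijection $S(P,\La)/\GL(P,\La)\simeq\Br(\dd,\kL_\La)\simeq\ko$ from the fact that the rational family in Theorem~\ref{theo_main}\,(1) lands in $\add\ko[t]$ rather than a nontrivial localisation.
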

  
  \begin{remk}
  One easily sees that if $S(P,\La)\ne\0$, it is open and dense in $\gl^*(P,\La)$.
  Unfortunately, in most cases it is empty. Nevertheless, there is at least 
  one case, when it is big indeed (see \cite{dr92,bdf}). It happens, when 
  $\La$ is a \emph{Dynkinian algebra}, that is an algebra derived equivalent to 
  the path algebra of a Dynkin quiver. Namely, if $\La$ is Dynkinian, the space 
 $\gl^*(P,\La)$ always contains an open dense subset $U$ such that all elements 
 $\xi\in U$ are \emph{semisimple}, i.e. direct sums of bricks that are \emph{mutually
 orthogonal}, that is every morphism between them is either zero or isomorphism. 
 Therefore, the stabilizer of a semisimple element is a product of full 
 linear groups over $\ko$.
  \end{remk}


\end{document}